    \theoremstyle{plain}
    \newtheorem{theorem}{Theorem}%[section]
    \newtheorem{proposition}[theorem]{Proposition}
    \newtheorem{corollary}[theorem]{Corollary}
    \newtheorem{lemma}[theorem]{Lemma}
    \newtheorem{conjecture}{Conjecture}
    \newtheorem*{theorem*}{Theorem}
    \newtheorem*{proposition*}{Proposition}
    \newtheorem*{corollary*}{Corollary}
    \newtheorem*{lemma*}{Lemma}
    \newtheorem*{conjecture*}{Conjecture}
    \theoremstyle{definition}
    \newtheorem{definition}[theorem]{Definition}
    \newtheorem{example}[theorem]{Example}
    \newtheorem*{definition*}{Definition}
    \newtheorem*{example*}{Example}
    \theoremstyle{remark}
    \newtheorem{remark}[theorem]{Remark}
    \newtheorem*{remark*}{Remark}
    \renewcommand{\epsilon}{\varepsilon}
    \newcommand{\bN}{\mathbb{N}}
    \newcommand{\nat}{\mathbb{N}}
    \newcommand{\bZ}{\mathbb{Z}}
    \newcommand{\integ}{\mathbb{Z}}
    \newcommand{\bG}{\mathbb{G}}
    \newcommand{\cO}{\mathcal{O}}
    \newcommand{\cF}{\mathcal{F}}
    \newcommand{\cG}{\mathcal{G}}
    \newcommand{\cL}{\mathcal{L}}
    \newcommand{\cA}{\mathcal{A}}
    \newcommand{\cB}{\mathcal{B}}
    \newcommand{\cP}{\mathcal{P}}
    \newcommand{\cT}{\mathcal{T}}
    \newcommand{\sC}{\mathscr{C}}
    \newcommand{\sT}{\mathscr{T}}
    \newcommand{\GT}{\mathscr{GT}}
    \newcommand{\gtpn}{\GT(\pro{n})}
    \newcommand{\im}{\operatorname{im }}
    \renewcommand{\ker}{\operatorname{ker }}
    \newcommand{\Res}{\operatorname{Res}}
    \newcommand{\Gr}{\operatorname{Gr}}
    \newcommand{\GR}{\operatorname{GR}}
    \newcommand{\GrEnd}{\operatorname{GrEnd}}
    \newcommand{\Hom}{\operatorname{Hom}}
    \newcommand{\Aut}{\operatorname{Aut}}
    \newcommand{\QGr}{\operatorname{QGr}}
    \newcommand{\Tors}{\operatorname{Tors}}
    \newcommand{\pro}[1]{\mathbb{P}^{#1}}
    \newcommand{\op}[1]{{#1}^{\textup{op}}}
    \newcommand{\iso}{\cong}
    \newcommand{\id}{\operatorname{id}}
     \newcommand{\ncpn}{\mathscr{NC}\paren{\pro{n}}}
     \newcommand{\ncp}[1]{\mathscr{NC}\paren{\pro{#1}}}
     \newcommand{\sqbinom}[2]{\genfrac{[}{]}{0pt}{1}{#1}{#2}}
     \newcommand{\defeq}{\stackrel{\scriptscriptstyle{\mathrm{def}}}{=}}
    \renewcommand{\iff}{\Leftrightarrow}
    \newcommand{\brac}[1]{\ensuremath{\left\{#1\right\}}}
    \newcommand{\paren}[1]{\ensuremath{\left(#1\right)}}
    \newcommand{\sqbrac}[1]{\ensuremath{\left[#1\right]}}
    \newcommand{\git}{\mathbin{
  \mathchoice{/\mkern-6mu/}% \displaystyle
    {/\mkern-6mu/}% \textstyle
    {/\mkern-5mu/}% \scriptstyle
    {/\mkern-5mu/}}}% \scriptscriptstyle
    \renewcommand\bar\overline
    \renewcommand\tilde\widetilde
    \newcommand{\PGL}[2]{\operatorname{PGL}_{#1}\!\paren{#2}}
\title{Automorphism groupoids in noncommutative projective geometry}
\author{Nicholas Cooney%\footnotemark[2]
\\ \small{\textit{Max-Planck-Institut f\"ur Mathematik,}}
\\ \small{\textit{Vivatsgasse 7, 53111 Bonn, Germany}}
\and  Jan E. Grabowski\footnotemark[3] 
\\ \small{\textit{Department of Mathematics and Statistics, Lancaster University,}}
\\ \small{\textit{Lancaster, LA1 4YF, United Kingdom}}
}
\date{21st April 2022}
\begin{document}
\maketitle

\renewcommand{\thefootnote}{\fnsymbol{footnote}}
%\footnotetext[2]{Email: \url{nicholas.cooney@uca.fr}.}
\footnotetext[3]{Email: \url{j.grabowski@lancaster.ac.uk}.  Website: \url{http://www.maths.lancs.ac.uk/~grabowsj/}}
\renewcommand{\thefootnote}{\arabic{footnote}}
\setcounter{footnote}{0}

\begin{abstract} 
  We address a natural question in noncommutative geometry, namely the rigidity observed in many examples, whereby noncommutative spaces (or equivalently their coordinate algebras) have very few automorphisms by comparison with their commutative counterparts.

  In the framework of noncommutative projective geometry, we define a groupoid whose objects are noncommutative projective spaces of a given dimension and whose morphisms correspond to isomorphisms of these.  This groupoid is then a natural generalization of an automorphism group.  Using work of Zhang, we may translate this structure to the algebraic side, wherein we consider homogeneous coordinate algebras of noncommutative projective spaces.  The morphisms in our groupoid precisely correspond to the existence of a Zhang twist relating the two coordinate algebras.

  We analyse this automorphism groupoid, using the geometry of the point scheme, as introduced by Artin-Tate-Van den Bergh, to relate morphisms in our groupoid to certain automorphisms of the point scheme.

  We apply our results to two important examples, the two-dimensional quantum projective space and Sklyanin algebras.  In both cases, we are able to use the geometry of the point schemes to fully describe the corresponding component of the automorphism groupoid.  This provides a concrete description of the collection of Zhang twists of these algebras.
  
  \vspace{1em}
\noindent MSC (2010): 16S38 (Primary), 16D90, 16W50, 20G42 (Secondary)

\end{abstract}

\vfill \vfill
\pagebreak
%\tableofcontents

\section{Introduction}

It is well-known that affine algebraic varieties can have many automorphisms, but that noncommutative deformations of these typically have very few automorphisms.  For example, let $q\in k^{\ast}$, $q$ not a root of unity, and denote by $\mathcal{O}_{q}(\mathbb{P}_{k}^{n})$ the $k$-algebra
\[ \mathcal{O}_{q}(\mathbb{P}_{k}^{n})=k\langle x_{0},\dotsc ,x_{n} \rangle/\langle x_{i}x_{j}=qx_{j}x_{i}\ \forall\ i<j \rangle. \]
Then Alev and Chamarie (\cite{AlevChamarie}) have shown that
\[ \Aut(\mathcal{O}_{q}(\mathbb{P}_{k}^{n}))=\begin{cases} (k^{*})^{n+1} & n\neq 2, \\ k\ltimes (k^{\ast})^{3} & n=2. \end{cases} \]

The same phenomenon of a significantly smaller automorphism group has been observed for many other quantum algebras: quantized enveloping algebras and related algebras, quantum matrices, quantum Weyl algebras, Nichols algebras, and others.  Contributors include Andruskiewitsch-Dumas \cite{AndruskiewitschDumas}, Fleury \cite{Fleury}, Goodearl-Yakimov \cite{GoodearlYakimovAuts}, Joseph \cite{Joseph}, Launois-Lenagan \cite{LaunoisLenaganAutsQM}, Rigal \cite{Rigal} and Yakimov \cite{YakimovLLConj},\cite{YakimovADConj}.

This suggests that the notion of automorphism might be too strong, and that looking at other ways in which noncommutative spaces can be regarded as the same might ``recover'' some of the lost symmetries.  To do this we will follow the philosophy of noncommutative geometry and focus on (graded) module categories as noncommutative spaces.

As we will see, noncommutative algebras sharing homological properties with a given commutative algebra have the property that module categories associated to them will be equivalent and even isomorphic - induced by so-called twists of these algebras.  We wish to remember \emph{how} these categories are equivalent and so we encode this data in a groupoid, rather than a group.

We will take a collection of noncommutative spaces that model our chosen classical space and, instead of allowing all possible morphisms between them, concentrate on just the isomorphisms between them.  Such a groupoid typically has many connected components.  This will be the basis for the construction we describe, which we approach through noncommutative projective geometry.

In this setting, certain graded module categories are considered to be noncommutative projective schemes. We define the class of algebras that we are interested in, and these will play the role of homogeneous coordinate rings of our noncommutative projective spaces.  We then recall the definition of a Zhang twist of a graded algebra. This is a family of automorphisms of the underlying vector space which induce a new associative multiplication, ``twisting" the original multiplication. We will need to restrict our attention to geometric algebras, as defined by Mori (\cite{Mori}): for these algebras, there is a natural subscheme $E$ of $\pro{n}$ and automorphism $\sigma$ of $E$ whose role we explain shortly.

We introduce and define our main object of study, the groupoid $\ncpn$, in Section~\ref{s:ncpn}. This is a groupoid whose objects are certain module categories for the coordinate rings of geometric noncommutative $\mathbb{P}^{n}$s and whose morphisms are particular equivalences of these categories induced by Zhang twists. That this is a groupoid is deduced from a result of Zhang, that twisting preserves the properties of a noncommutative $\mathbb{P}^{n}$. We study slice categories of $\ncpn$, which allow us to study all the twists - that is, all of the ``generalized automorphisms" - of a given noncommutative $\mathbb{P}^{n}$.

We then link Zhang twists of geometric algebras with automorphisms of their associated point schemes, for arbitrary $n$. Let $\mathcal{A}$ be an object in $\ncpn$ associated to a noncommutative $\mathbb{P}^{n}$ which is geometric in Mori's sense. Let $E\subset\mathbb{P}^{n}$ be its associated point scheme, $\sigma$ a certain automorphism of $E$ and $\mathcal{L}$ the very ample line bundle giving the embedding into $\mathbb{P}^{n}$; these are packaged together to give the geometric triple $(E,\sigma,\cL)$ associated to $\cA$.  Twists of geometric noncommutative $\pro{n}$s correspond to geometric systems, certain families of maps between triples.

Let $\Aut(E \uparrow \pro{n})$ be the subgroup of automorphisms of the point scheme that extend to $\pro{n}$ and let $\mathrm{Res}_{E}\colon \Aut(\pro{n} \downarrow E)\to \Aut(E \uparrow \pro{n})$ denote the homomorphism taking an automorphism of $\pro{n}$ that preserves $E$ to its restriction to $E$.

The main result of the paper is the following, Theorem~\ref{t:aut-ncpn}:

\begin{theorem*} Let $\cA$ be a geometric noncommutative $\pro{n}$ with associated geometric triple $(E,\sigma,\cL)$.  Then the twists of $\cA$ are parameterised by a subset of $\Aut(E\uparrow \pro{n})\sigma \times (\ker \mathrm{Res}_{E})^{\nat}$ (up to isomorphism of the associated homogeneous coordinate ring).
\end{theorem*}

In important examples, $\mathrm{Res}_{E}$ is injective: in such cases, we see that twists are parameterised (up to isomorphism) by a subset of the coset $\Aut(E \uparrow \pro{n})\sigma$. We can also give a geometric condition for the subset corresponding to twists being all of $\Aut(E\uparrow \pro{n}) \times (\ker \mathrm{Res}_{E})^{\nat}$.

The theorem therefore says that the (generalized) automorphisms of a geometric noncommutative $\pro{n}$ are controlled by the geometry of the point scheme with its embedding in $\pro{n}$.  We conclude with three examples of the theorem: the commutative case of $\cO(\pro{n})$; quantum deformations of polynomial rings in $n$ variables; and Sklyanin algebras, an important class of noncommutative $\pro{2}$s.  

\subsection*{Acknowledgements}

The authors would like to thank Yemon Choi, Andrew Davies, Jonny Evans, Sian Fryer, Sue Sierra and Toby Stafford for helpful discussions during the development of this work, as well as the many seminar audience members whose questions aided our efforts to clarify the presentation of the results herein.

Financial support from the EPSRC (EP/M001148/1) and the ERC project ModRed is gratefully acknowledged. The first author is grateful to the Max Planck Institute for Mathematics for its hospitality and financial support during the preparation of this paper.

\section{Preliminaries on noncommutative projective geometry}\label{s:preliminaries}

\subsection{Noncommutative projective spaces}

Let $k$ be an algebraically closed field of characteristic $0$ and let $A$ be a right Noetherian $\integ$-graded $k$-algebra. Let $\Gr A$ denote the category of (not necessarily finitely generated) $\integ$-graded right $A$-modules with morphisms of degree $0$. Henceforth, by any graded module category over a $k$-algebra $A$, we mean the category of right $A$-modules. We denote by $(1)$ the grading shift functor on $\Gr A$; this functor is an auto-equivalence. That is, for $M$ a $\integ$-graded right $A$-module, and $n\in\integ$ denote by $M(n)$ the graded module whose degree $m$ component is given by $M(n)_{m}=M_{n+m}$. Recall from~\cite{ArtinZhang} that, for $M$ in $\Gr A$, an element $x\in M$ is called \textit{torsion} if $xA_{\geq s}=0$ for some s. The set of torsion elements of $M$ forms a graded $A$-submodule, $\tau(M)$. The module $M$ is called torsion if $\tau(M)=M$ and torsion-free if $\tau(M)=0$. The submodule $\tau(M)$ is the smallest such that the quotient $M\slash\tau(M)$ is torsion-free. The collection of all torsion modules forms a Serre subcategory of $\Gr A$ which we denote $\Tors (A)$.

Let $\pi:\Gr A\to \Gr A\slash\Tors (A)$ be the canonical quotient functor and define $\QGr A \defeq\Gr A\slash\Tors (A)$. One can roughly think of $\QGr A$ as the category whose objects are the same as those of $\Gr A$, but with torsion modules isomorphic to the zero module. More precisely, the objects of $\QGr A$ are those of $\Gr A$ and the morphisms can be described as
\[
\Hom_{\QGr A}(\pi(M),\pi(N))\cong\varinjlim\Hom_{\Gr A}(M^{\prime},N\slash\tau(N))
\]
where the limit runs over the quasi-directed category of submodules $M^{\prime}$ of $M$ such that $M\slash M^{\prime}$ is torsion. Note that the shift functor $(1)$ descends to $\QGr A$; we will abuse notation and denote the shift on $\QGr A$ by $(1)$ also. See~\cite{ArtinZhang} and \cite{StaffVdB} for more details. 

We now restrict to considering noncommutative analogues of $\pro{n}$. That is, we want to define the class of noncommutative $k$-algebras that we consider as giving rise to \textit{noncommutative projective $n$-space} - triples of the form $\paren{\QGr A, \pi A, (1)}$ where the $k$-algebra $A$ shares important homological and algebraic properties with a graded polynomial ring in $n+1$ variables. We adopt a variant of the definition given in section 4 of \cite{Keeler}.  We direct the reader to the survey \cite{StaffVdB} for definitions not included here as well as more detailed intuition and motivation behind this definition.

\begin{definition}[{cf.\ \cite[Definition 4.3]{Keeler}}]\label{Keeler} Let $A$ be a connected graded $k$-algebra. We say that the triple $\paren{\QGr A, \pi A, (1)}$ is a \emph{noncommutative} $\pro{n}$ and that $A$ is \emph{the (homogeneous) coordinate ring of a noncommutative} $\pro{n}$ if
\begin{enumerate}[label=\textup{(\roman*)}]
\item $A$ is a Noetherian domain;
\item $A$ is AS-regular of dimension $n+1$;
\item $A$ is generated in degree 1; and
\item $A$ has Hilbert series $\paren{1-t}^{-n-1}$.
\end{enumerate}
\end{definition}

As Keeler notes, his characterization is more restrictive than is found elsewhere in the literature.

\begin{remark}
Note that we have reordered the conditions in Keeler's definition so that $A$ being Noetherian is placed first. In this case, $A$ has equal left and right global dimensions, and equal left and right injective dimensions. Moreover, the definition of AS-regularity is left-right symmetric and so we can omit the conditions placed on $A^{op}$ by Keeler.
\end{remark} 

\subsection{Zhang twists}

We recall the definitions related to the notion of a Zhang twist, the relevance of which will become clear shortly.

\begin{definition}[{\cite[Definition 2.1]{Zhang}}] Let $\bG$ be a semigroup with identity $e$ and let $A$ be a $\bG$-graded $k$-algebra. A \emph{twisting system} for $A$ is a set of graded $k$-linear $A$-automorphisms $\tau=\brac{\tau_{g} \mid g\in\bG}$ such that $$\tau_{g}\paren{y\tau_{h}\paren{z}}=\tau_{g}\paren{y}\tau_{gh}\paren{z}$$ for all $g,h,l\in\bG$, $y\in A_{h}$ and $z\in A_{l}$.
\end{definition}

Any semigroup homomorphism $\bG\rightarrow\Aut_{\bG}\paren{A}$ given by $g\mapsto\tau_{g}$ produces a twisting system $\brac{\tau_{g} \mid g\in\bG}$ where $\Aut_{\bG}\paren{A}$ denotes the group of graded algebra automorphisms of $A$. Twisting systems arising from semigroup homomorphisms as above are called \emph{algebraic}\label{p:alg-twist}. For example, for $\bG=\bZ$ and $f\in\Aut_{\bZ}\paren{A}$, we have the homomorphism $n\mapsto f^{n}$ and corresponding twisting system $\brac{f^{n} \mid n\in\bZ}$. 

In the case of a connected $\mathbb{N}$-graded algebra $A=\bigoplus A_{i}$ generated in degree 1, the fundamental defining relations for a Zhang twist are equivalent to the following (\cite[p.~284]{Zhang}).  A twisting system in this case is a set $\tau=\{ \tau_{n} \mid n\in \mathbb{N} \}$ of graded linear isomorphisms satisfying
\[ \tau_{m}(ab)=\tau_{m}(a)\tau_{m+n}\tau_{n}^{-1}(b) \]
for $a\in A_{n}$.  Notice that if $\tau_{m+n}\tau_{n}^{-1}$ were equal to $\tau_{m}$, we would have that $\tau_{m}$ was an algebra map (and in fact then an automorphism).  So this formulation makes visible another way in which twisting systems are close to, but more general than, algebra automorphisms.

As with other notions of twisting, such as twisting by automorphisms or by group 2-cocycles, given a twisting system $\tau$ for $A$, we can form a new algebra $A^{\tau}$, whose underlying $k$-vector space is the same as that of $A$ but whose multiplication is twisted by $\tau$.

\begin{definition}[{\cite[Definition and Proposition 2.3]{Zhang}}] Let $A$ be a $\bG$-graded $k$-algebra and $\tau=\brac{\tau_{g} \mid g\in\bG}$ be a twisting system for $A$. One defines the \emph{twisted algebra of $A$ by $\tau$}, denoted $A^{\tau}$, as the triple $\paren{\oplus_{g}A_{g},\star,1_{\tau}}$ where $\star$ is an associative, graded multiplication given by
  \[ y\star z\defeq y\tau_{h}\paren{z} \]
  for $y\in A_{h}$ and $z\in A_{l}$ and where $1_{\tau}\defeq\tau_{e}^{-1}\paren{1_{A}}$ is the identity in $A^{\tau}$.
\end{definition}

The following straightforward lemma concerns twisting systems for quadratic algebras and generalizes an observation of Zhang, made in examining Example~5.12 in \cite{Zhang}.

\begin{lemma} Let $A$ be a connected $\bN$-graded finitely generated algebra, generated in degree 1 and quadratic.  Let $\tau=\{ \tau_{m} \mid m\in \bN\}$ be a twisting system.  Suppose the graded linear isomorphism $\tau_{1}$ is additionally an algebra automorphism and set $\tau'=\{ \tau_{1}^{m} \mid m\in \bN \}$. Then we have an isomorphism of algebras $A^{\tau} \cong A^{\tau'}$. \qed
\end{lemma}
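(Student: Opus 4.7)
My plan is to show that the two twisted multiplications $\star$ (on $A^\tau$) and $\star'$ (on $A^{\tau'}$) coincide as operations on the common underlying vector space $A$, so that the identity map serves as the isomorphism. Both $A^\tau$ and $A^{\tau'}$ are generated in degree 1 as algebras: indeed, since $A$ is generated in degree 1 and each $\tau_m$ is a graded linear isomorphism, the subspaces spanned by $\star$-products (respectively $\star'$-products) of elements of $A_1$ recover all of $A_n$. It therefore suffices to compare $\star$- and $\star'$-products of degree-1 elements, and then transport the comparison across arbitrary sums and products.

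The key step is to establish, by induction on $n$, the identity
\[ v_1 \star v_2 \star \cdots \star v_n = v_1 \cdot \tau_1(v_2) \cdot \tau_1^2(v_3) \cdots \tau_1^{n-1}(v_n) \]
for $v_1, \dots, v_n \in A_1$, where the right-hand side is the original product in $A$. Writing the left-hand side as $v_1 \star (v_2 \star \cdots \star v_n)$ by associativity, the inductive hypothesis presents the bracketed factor as $w = v_2 \tau_1(v_3) \cdots \tau_1^{n-2}(v_n) \in A_{n-1}$. Then $v_1 \star w = v_1 \tau_1(w)$ by the definition of $\star$ with left factor in $A_1$, and the hypothesis that $\tau_1$ is an algebra automorphism allows $\tau_1$ to distribute across the product defining $w$, yielding the claimed formula. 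The same calculation with $\star'$, using $\tau'_1 = \tau_1$, produces the identical right-hand side, so $\star$- and $\star'$-products of degree-1 elements agree in $A$.

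To conclude, I would take arbitrary $a, b \in A$, decompose each as a sum of $\star$-products of $A_1$-elements (possible by the generation statement), and use distributivity together with the associativity of $\star$ to express $a \star b$ as a sum of $\star$-products of $A_1$-elements of total length equal to the combined length. By the previous step each such product equals the corresponding $\star'$-product, and the same decomposition (with the same coefficients) computes $a \star' b$, so $a \star b = a \star' b$ in $A$. Hence the identity map is a graded algebra isomorphism $A^\tau \xrightarrow{\sim} A^{\tau'}$.

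The main technical subtlety is that one should \emph{not} try to prove the pointwise identity $\tau_n = \tau_1^n$ on $A$, which can fail in general; rather, the hypothesis on $\tau_1$ is used only to commute $\tau_1$ with products in the inductive step, after which the two twisted products agree as elements of $A$ even when the higher $\tau_n$ differ from $\tau_1^n$. Notably, the argument does not explicitly invoke quadraticity; generation in degree 1 is all that is actually required of the graded structure of $A$.
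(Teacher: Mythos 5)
Your proof is correct in substance, but it takes a genuinely different route from the paper's and in fact yields a stronger conclusion. The paper argues at the level of presentations, following Zhang's Example~5.12: it writes $A=T(A_{1})/(\text{quadratic relators})$, twists each relator $R$ and observes that the twisted relation $R^{\tau}$ involves only $\tau_{1}$, so that $A^{\tau}$ and $A^{\tau'}$ are both generated by $A_{1}$ subject to the same relations; quadraticity is used essentially there, since it guarantees the relators sit in degree $2$, where only $\tau_{1}$ can appear. You instead prove that the two twisted multiplications coincide as operations on the common underlying space, via the identity $v_{1}\star\cdots\star v_{n}=v_{1}\tau_{1}(v_{2})\tau_{1}^{2}(v_{3})\cdots\tau_{1}^{n-1}(v_{n})$ for $v_{i}\in A_{1}$ (the paper's relator computation is exactly this identity for $n=2$), which uses only generation in degree $1$ and the fact that $\tau_{1}$ is an algebra automorphism; so your closing remark that quadraticity is not needed is right, your isomorphism is the identity map rather than an abstract one, and you correctly avoid the false pointwise claim $\tau_{n}=\tau_{1}^{n}$ (which can fail when $A$ has zero-divisors, though it does hold when $A$ is a domain). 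One small point needs patching: your reduction "decompose each as a sum of $\star$-products of $A_{1}$-elements" only applies to homogeneous elements of positive degree, since $A_{0}=k$ is not spanned by such products, so you must check separately that $\star$ and $\star'$ agree when one factor is a scalar, i.e.\ that $\tau_{0}$ and the scalars $\tau_{m}(1)$ act trivially. This is automatic under your hypotheses: applying the twisting relation $\tau_{g}(y\tau_{h}(z))=\tau_{g}(y)\tau_{g+h}(z)$ with $h=0$ and $y=1_{A}$ gives $\tau_{0}=\tau_{g}(1)\cdot\operatorname{id}$ for every $g$, and $\tau_{1}(1)=1$ because $\tau_{1}$ is an algebra automorphism, so $\tau_{0}=\operatorname{id}$ and $\tau_{m}(1)=1$ for all $m$ (alternatively one may normalise $\tau_{0}=\operatorname{id}$ by Zhang's Proposition~2.4 at the cost of an isomorphism, which is harmless for the stated lemma). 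It would also be worth making explicit, in your final step, that because each $\star$-product of degree-one elements equals the corresponding $\star'$-product as an element of $A$, the very same linear combinations expressing $a$ and $b$ as sums of $\star$-products express them as sums of $\star'$-products; this is the hinge on which the comparison of $a\star b$ with $a\star' b$ turns, and your wording leaves it implicit.
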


\begin{remark} Note that the condition on $\tau_{1}$ in the lemma is necessary, in the sense that $\tau'=\{ \tau_{1}^{m} \mid m\in \bN \}$ is a twisting system if and only if $\tau_{1}$ is an algebra automorphism, under the other assumptions of the lemma.
\end{remark}

Given a coordinate ring of a noncommutative $\pro{n}$ (that is, an algebra $A$ satisfying the conditions in Definition~\ref{Keeler}), Theorem 1.3 in \cite{Zhang} states that if $A$ is a Noetherian domain, then so is any twist of it. Clearly, the degree of an element is preserved by twisting, and hence any twist of $A$ is generated in degree 1 with Hilbert series $\paren{1-t}^{-n-1}$. From Theorem 5.11 of \cite{Zhang}, we have that any twist of $A$ will be AS-regular of the same dimension.  The class of connected graded $k$-algebras defined above is thus closed under Zhang twisting.  With this in mind, we make the following definition.

\begin{definition} Let $A$ and $B$ be $\bG$-graded $k$-algebras.  We say that $A$ and $B$ are twist-equivalent if there exists a twisting system $\tau$ for $A$ such that $B\iso A^{\tau}$ as graded $k$-algebras.
\end{definition}

The above description of Zhang twisting is rather concrete, which has advantages when one wants to explicitly calculate specific twists.  However, even demonstrating some elementary properties, such as showing that twist-equivalence is actually an equivalence relation, can be cumbersome when using this definition.

Along with the definition, the other key insight of Zhang was that twist-equivalence of $A$ and $B$ is equivalent to the existence of a well-behaved functor between certain corresponding categories of graded modules for $A$ and $B$.  From this, it is easy to see that twisting gives an equivalence relation.  We briefly outline the results of Zhang in this direction, taken from \cite{Zhang}, in order to motivate the definition of the groupoid we wish to study.

Assume that $A$ is a connected $\integ$-graded $k$-algebra.  In Definition~\ref{Keeler}, we assume a lot more about algebras that are associated to noncommutative $\mathbb{P}^{n}$s, but even this is already reasonably strong.

Let $\GR A$ denote the category of $\integ$-graded $A$-modules with morphisms being graded morphisms of any degree - that is, $\Hom_{\GR A}(M,N)=\bigoplus_{r\in \integ} \Hom_{\Gr A}(M,N(r))$ (note that Zhang writes $\underline{\Hom}(M,N)$).

We also note that $\Hom_{\GR A}(M,M)$, denoted $\Gamma(M)$ by Zhang, is important in a key theorem of Zhang (\cite[Theorem 3.3]{Zhang}), most notably in the case $M=A_{A}$.  We define $\GrEnd(A_{A}) \defeq \Hom_{\GR A}(A_{A},A_{A})$, the graded endomorphism algebra of $A_{A}$.  By the proof of \cite[Theorem 3.4]{Zhang}, with the above assumptions, $\GrEnd(A_{A})\iso A$; that is, by analogy with the ungraded theory, $A$ is isomorphic to its graded endomorphism algebra.

We say that $A$ and $B$ are graded Morita equivalent\footnote{Unfortunately, this terminology is used differently by different authors. Our usage follows Zhang's and is justified by the following proposition.} if $\GR A$ is equivalent to $\GR B$ by a functor $F$ which induces a map of graded rings on $\Hom$ spaces, i.e.\ $F(\Hom_{\GR A}(M,N)_{r})\subseteq \Hom_{\GR B}(FM,FN)_{r}$.  For connected graded algebras, such as those we consider, we have the following equivalent characterisations.

\begin{proposition}\label{p:categorical-graded-Morita-eq} Let $A$ and $B$ be coordinate rings of noncommutative $\pro{n}$s, so that $\paren{\QGr A,\pi A, (1)}$ and $\paren{\QGr B,\pi B, (1)}$ are noncommutative $\pro{n}$s.  The following are equivalent:
  \begin{enumerate}[label=\textup{(\roman*)}]
  \item $A\iso \GrEnd(P_{B})$ as graded algebras for $P_{B}$ a graded progenerator of $\Gr B$;
  \item $\GR A$ is equivalent to $\GR B$ via a functor preserving degrees of morphisms;
  \item\label{Gr-equiv-gr-functor} $\Gr A$ is equivalent to $\Gr B$ via a graded functor, i.e.\ one that commutes with the shift functors;
  \item $A$ is isomorphic to $B$ as graded algebras.
  \end{enumerate}
\end{proposition}

\begin{proof} The equivalence of the first three conditions holds more generally (not just for connected graded algebras) and details may be found in \cite[Theorem~5.4]{GordonGreen} and \cite[Theorem~2.3.8]{Hazrat}.

The equivalence of all of these with the fourth for coordinate rings of noncommutative $\pro{n}$s in particular follows from noting that for those algebras the only indecomposable graded projective right modules are shifts of the right regular module (cf. \cite[Theorem 3.5]{Zhang}), as they are connected graded. 
\end{proof}

Note that as a consequence of this proposition, we may replace \ref{Gr-equiv-gr-functor} by the stronger statement

\begin{enumerate}[label=\textup{(\roman*)$'$}]
  \setcounter{enumi}{2}
\item  $\Gr A$ is isomorphic to $\Gr B$ via a graded functor.
  \end{enumerate}

In fact, the strength of the assumptions on the algebras we are considering allows us to prove the following.

\begin{proposition} Let $F\colon \Gr A \to \Gr B$ be a graded Morita equivalence.  Then, up to shift, $F$ is isomorphic to the pushforward $f_{\ast}$ of a graded isomorphism of algebras $f\colon B\to A$.
\end{proposition}

\begin{proof} By graded Morita theory (see e.g. \cite[Theorem 2.3.7]{Hazrat}), $F$ is represented by a graded progenerator; that is, $F\iso \Hom_{\Gr A}(P,-)$ and $\GrEnd(P)\iso \GrEnd(B)\iso B$.  Again using that since $A$ is connected graded, the only indecomposable graded projective right modules are shifts of the right regular module, hence $P\iso A(\alpha)$ for some $\alpha \in \bZ$.  Hence, by shifting, we may assume that $P\iso A$.

We deduce that there exists an isomorphism $f\colon B\to A$ such that $F\iso \Hom_{\Gr A}(A,-)\iso f_{\ast}$ as required. The latter follows since the functor $-\otimes_{B} {}_{B}A_{A}$ is a right adjoint for both functors, where the bimodule structure on ${}_{B}A_{A}$ is that induced by $f$.
\end{proof}

As previously, we say $A$ and $B$ are twist-equivalent if $B$ is isomorphic to a Zhang twist of $A$.

\begin{proposition}\label{p:categorical-twist-eq} Let $A$ and $B$ be coordinate rings of noncommutative $\pro{n}$s, so that $\paren{\QGr A,\pi A, (1)}$ and $\paren{\QGr B,\pi B, (1)}$ are noncommutative $\pro{n}$s.  The following are equivalent:
  \begin{enumerate}[label=\textup{(\roman*)}]
  \item $A$ and $B$ are twist-equivalent, via a twisting system $\tau$.
  \item $\Gr A$ is isomorphic to $\Gr B$.
  \item $\QGr A$ is equivalent to $\QGr B$ via a functor $\mathcal{F}$ and
  \begin{equation}\label{SSS}\tag{SSS} \mathcal{F}((\pi A)(m))\iso (\pi B)(m)\ \text{for all}\ m \end{equation}
  (the ``preserves shifts of the structure sheaf'' condition).
  \end{enumerate}
\end{proposition}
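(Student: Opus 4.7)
The plan is to combine Zhang's characterization of twist-equivalence in terms of graded module categories with the Artin--Zhang reconstruction theorem, which identifies the algebra $A$ with the graded endomorphism ring $\bigoplus_n \Hom_{\QGr A}(\pi A, (\pi A)(n))$ in sufficiently high degrees. I will verify (i) $\Rightarrow$ (ii) $\Rightarrow$ (iii) $\Rightarrow$ (i), then dispose of the Morita addendum using the fact recalled just before the proposition.

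For (i) $\Rightarrow$ (ii), I would exhibit the functor explicitly: given a twisting system $\tau$ for $A$, define $F_\tau \colon \Gr A \to \Gr A^\tau$ on an object $M$ by keeping the underlying $\bZ$-graded vector space and setting $m \ast_\tau a \defeq m\,\tau_h(a)$ for $m \in M_h$, $a \in A$; on morphisms $F_\tau$ acts as the identity on the underlying linear map. The twisting-system identity makes this associative, and one checks that $F_\tau$ is a graded isomorphism of categories with $F_{\tau^{-1}}$ as its inverse (this is the content of Zhang's main result on twisted graded algebras). For the converse direction, given an isomorphism $\Gr A \iso \Gr B$, one uses the identification $\GrEnd(A_A) \iso A$ recalled earlier to transport $A$'s algebra structure into $B$; the discrepancy between this transported structure and the actual one is precisely recorded by a twisting system.

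For (i) $\Rightarrow$ (iii), note that $F_\tau$ preserves torsion: the graded linear automorphisms $\tau_g$ are degree-preserving bijections, so $x \ast_\tau (A^\tau)_{\geq s} = 0$ if and only if $x \cdot A_{\geq s} = 0$. Hence $F_\tau$ passes to the Serre quotient and induces an equivalence $\mathcal{F} \colon \QGr A \to \QGr B$. The free module $A_A$ is sent to $(A^\tau)_{A^\tau} = B_B$ (this is essentially the definition of the twisted algebra), and $F_\tau$ commutes with the shift functor, so $\mathcal{F}((\pi A)(n)) \iso (\pi B)(n)$ for all $n$, giving condition (SSS).

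The main obstacle is (iii) $\Rightarrow$ (i), where I invoke Artin--Zhang. Since $A$ and $B$ satisfy the hypotheses of Definition~\ref{Keeler}, the reconstruction theorem gives graded $k$-linear isomorphisms
\[ A_n \iso \Hom_{\QGr A}(\pi A, (\pi A)(n)), \qquad B_n \iso \Hom_{\QGr B}(\pi B, (\pi B)(n)) \]
in sufficiently high degree. The condition (SSS) supplies a family of isomorphisms $\mathcal{F}((\pi A)(n)) \iso (\pi B)(n)$, which induces $k$-linear isomorphisms $\varphi_n \colon A_n \to B_n$ via $\mathcal{F}$ on Hom spaces. The subtle point is that the choices of isomorphism $\mathcal{F}((\pi A)(n)) \iso (\pi B)(n)$ are only specified up to a scalar (since the objects are simple in the relevant sense), and composition in $\QGr$ corresponds to multiplication, so $\{\varphi_n\}$ need not be an algebra map. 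The obstruction to multiplicativity is precisely a $2$-cocycle-like datum on the grading semigroup: setting $\tau_n \defeq \varphi_n^{-1} \circ (\text{compatibility isomorphism}) \circ \varphi_n$, one shows this family satisfies the twisting-system identity and that $B \iso A^\tau$. To extend the identification from high degrees to all degrees, one uses that $A$ is generated in degree $1$.

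Finally, for the graded Morita statement: by the fact quoted before the proposition, graded Morita equivalence of connected graded algebras coincides with isomorphism. If $A$ and $B$ are graded Morita equivalent then $B \iso A$, and picking the trivial twisting system gives $B \iso A^\tau \iso A$. Conversely, $B \iso A$ trivially implies graded Morita equivalence, completing the argument.
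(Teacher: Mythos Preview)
Your argument is essentially correct and follows the same route as the paper: both proofs assemble Zhang's results (the explicit twist functor on $\Gr A$, the characterization via $\GrEnd(A_A)\iso A$, and the passage to $\QGr$) together with the Artin--Zhang identification $\GrEnd(\pi A)\iso A$ under the AS-Gorenstein hypothesis to close the loop (iii)~$\Rightarrow$~(i). The paper presents this as a citation roadmap through a list of eight auxiliary conditions, pinpointing which theorem of \cite{Zhang} or \cite{ArtinZhang} supplies each implication, whereas you sketch the constructions directly; the underlying mathematics is the same.

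One inaccuracy to fix: in your (iii)~$\Rightarrow$~(i) step you justify ``the isomorphism $\mathcal{F}((\pi A)(n))\iso(\pi B)(n)$ is determined up to scalar'' by saying the objects are ``simple in the relevant sense.'' They are not simple; $\pi A$ is the structure sheaf, a large object. The correct reason is that $B$ is connected, so $\Hom_{\QGr B}((\pi B)(n),(\pi B)(n))\cong B_{0}=k$, whence any two such isomorphisms differ by a scalar. Also, your caveat ``in sufficiently high degrees'' for the Artin--Zhang reconstruction is unnecessarily cautious here: under the AS-Gorenstein hypothesis (which is part of Definition~\ref{Keeler}) one has $\GrEnd(\pi A)\iso A$ in all degrees, so the extension step you mention at the end is not needed. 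Neither point breaks the argument, but both should be corrected.
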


\begin{proof} This follows by combining results of \cite[\S 3]{Zhang} and \cite{ArtinZhang}.  The key assumption is that the algebras considered are AS-Gorenstein and as such
\[ \bigoplus_{m\in \bN} \Hom_{\QGr A}(\pi A,(\pi A)(m)) \iso A \] which very much need not be the case in general.
\end{proof}

As noted above, Artin and Zhang proved that the graded endomorphism algebra of $\pi A$ in $\QGr A$,
  \[ \GrEnd(\pi A)\defeq \bigoplus_{m\in \bN} \Hom_{\QGr A}(\pi A,(\pi A)(m)) \]
is isomorphic as a graded $k$-algebra to $A$.  Hence, in our setting the algebra $A$ can be recovered from the data $(\QGr A, \pi A, (1))$. 

We thus take condition (iii) in Proposition~\ref{p:categorical-twist-eq}, $\QGr A$ being equivalent to $\QGr B$ and \eqref{SSS} holding, as the most appropriate formulation of the isomorphism of two noncommutative projective spaces. 

However it is useful to phrase certain statements in terms of properties of the algebras that are coordinate rings of noncommutative $\pro{n}$s, as we have being doing, as not all of the conditions of Definition~\ref{Keeler} have a convenient expression internal to $\QGr A$.

Noting the above remarks on graded endomorphism algebras, for completeness we may add additional items to our list of equivalent conditions in Proposition~\ref{p:categorical-graded-Morita-eq}:

\begin{enumerate}[label=\textup{(\roman*)}]
  \setcounter{enumi}{4}
\item\label{QGr-equiv-gr-functor} $\QGr A$ is equivalent to $\QGr B$ via a graded functor;
\item $A$ and $B$ are twist-equivalent via the identity twisting system.
  \end{enumerate}

\subsection{Geometric algebras}

Our approach will tie algebraic properties (being a twist of an algebra) to geometric ones, and to do this we will need to restrict the algebras we consider to those that are geometric, in the following sense due to Mori.

Let $V$ be a finite-dimensional vector space and $I$ a vector subspace of $V^{\otimes r}$. Denote by $\mathcal{V}(I)$ the projective subscheme of $\mathbb{P}(V^{\ast})^{r}$ determined by $I$, thinking of elements of $I$ as multilinear functions on $(V^{\ast})^{r}$.

\begin{definition}[{\cite[Definition~4.3]{Mori}}]\label{d:geometric} A quadratic algebra $A=T(V)/R$ is called geometric if there is a triple $(E,\sigma,\cL)$ where $j\colon E \to \mathbb{P}(V^{\ast})$ is an embedding of $E$ as a closed $k$-subscheme, $\cL$ is the associated line bundle and $\sigma$ is a $k$-automorphism of $E$ such that
  \begin{enumerate}
  \item $\Gamma_{2}=\mathcal{V}(R)\subset \mathbb{P}(V^{\ast})\times \mathbb{P}(V^{\ast})$ is the graph of $E$ under $\sigma$, and
  \item setting $\mathcal{L}=j^{\ast}\cO_{\mathbb{P}(V^{\ast})}(1)$, the map
    \[ \mu\colon H^{0}(E,\mathcal{L})\otimes H^{0}(E,\mathcal{L})\to H^{0}(E,\mathcal{L}\otimes_{\cO_{E}} \sigma^{\ast}\mathcal{L}) \] defined by $v\otimes w\mapsto v\otimes (w\circ \sigma)$ has $\ker \mu=R$, with the identification
    \[ H^{0}(E,\mathcal{L})=H^{0}(\mathbb{P}(V^{\ast}),\cO(\mathbb{P}(V^{\ast})(1))=V \] as $k$-vector spaces.
  \end{enumerate}
  If $A$ is geometric as above, we associate the triple $(E,\sigma, \cL)$ to both $A$ and the triple $(\QGr A, \pi A, (1))$; we will make this association more formal later.
\end{definition}

  \begin{theorem}[{\cite[Theorem~4.7]{Mori}}]\label{t:Mori-full} Let $A=T(V)/I$ and $A'=T(V)/I'$ be graded algebras finitely generated in degree 1 over $k$.
    \begin{enumerate}
    \item If $A$ is geometric with associated triple $(E,\sigma,\cL)$ and $\Gr A\iso \Gr A'$ then $A'$ is also geometric with associated triple $(E',\sigma',\cL')$ and there is a sequence of automorphisms $\{ \tau_{n}^{\ast} \}$ of $\mathbb{P}(V^{\ast})$, each of which sends $E$ isomorphically onto $E'$ such that $(\tau_{n+1}^{\ast}|_{E})\sigma = \sigma'(\tau_{n}^{\ast}|_{E})$ for every $n\in \integ$.
    \item Conversely if $A$ and $A'$ are geometric, with associated triples $(E,\sigma,\cL)$ and $(E',\sigma',\cL')$ respectively, and there is a sequence of automorphisms $\{ \tau_{n}^{\ast} \}$ of $\mathbb{P}(V^{\ast})$, each of which sends $E$ isomorphically onto $E'$ such that $(\tau_{n+1}^{\ast}|_{E})\sigma = \sigma'(\tau_{n}^{\ast}|_{E})$ for every $n\in \integ$, then $\Gr A \iso \Gr A'$.
      \end{enumerate}
\end{theorem}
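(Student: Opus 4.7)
The plan is to reduce the theorem to the algebraic equivalence between twisting systems and isomorphisms of graded module categories, established in Proposition~\ref{p:categorical-twist-eq} (via Zhang's theorems), and then translate between a twisting system on $A$ and a sequence of projective automorphisms of $\mathbb{P}(V^{\ast})$ via the multilinearization of relations. Concretely, $\Gr A \iso \Gr A'$ if and only if there exists a twisting system $\tau=\{\tau_n\}$ with $A'\iso A^{\tau}$, so the task is to convert between the data of $\tau$ and the data of the sequence $\{\tau_n^{\ast}\}$ in Theorem~\ref{t:Mori-full}, using geometricity to ensure that each side is characterized by its point scheme and its automorphism.

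For direction (1), I start with a twisting system $\tau=\{\tau_n\}$. Each $\tau_n\colon A\to A$ is a graded $k$-linear isomorphism, so $\tau_n|_{A_1}\colon V\to V$ is an element of $GL(V)$. Dualize and projectivize to produce $\tau_n^{\ast}\in\Aut(\mathbb{P}(V^{\ast}))$. I then examine how the defining relations transform: if $R\subset V\otimes V$ cuts out $A$, then the relations $R'$ for $A^{\tau}$ are obtained by applying an appropriate product $\tau_m^{\ast}\otimes\tau_{m+1}^{\ast}$ to the multilinearized versions, so the multilinearized locus $\Gamma_2'=\mathcal{V}(R')$ is the image of $\Gamma_2=\mathcal{V}(R)$ in $\mathbb{P}(V^{\ast})\times\mathbb{P}(V^{\ast})$ under $\tau_m^{\ast}\times\tau_{m+1}^{\ast}$. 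Projecting to the first factor shows $\tau_n^{\ast}(E)=E'$ for every $n$; geometricity of $A$ (the graph property of $\Gamma_2$) together with the shift in indices built into the twisting identity $\tau_m(ab)=\tau_m(a)\tau_{m+n}\tau_n^{-1}(b)$ then yields the intertwining $(\tau_{n+1}^{\ast}|_E)\sigma=\sigma'(\tau_n^{\ast}|_E)$. Geometricity of $A'$ follows because its relations are cut out by the graph of $\sigma'$ under the identification $V\iso H^0(E',\mathcal{L}')$ inherited from $\tau_n^{\ast}$.

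For direction (2), given the sequence $\{\tau_n^{\ast}\}$, lift each $\tau_n^{\ast}$ to a linear automorphism $T_n\in GL(V)$ with a consistent normalization (e.g.\ setting $T_0=\mathrm{id}$). Extend these to degree-preserving $k$-linear isomorphisms $\tau_n\colon A\to A$ degree by degree, using that $A$ is generated in degree $1$; the intertwining condition $(\tau_{n+1}^{\ast}|_E)\sigma=\sigma'(\tau_n^{\ast}|_E)$ is precisely what ensures compatibility with the multilinearized relations and hence well-definedness on each $A_n$. The same identity, unwrapped at the level of relations, is exactly the algebraic condition one needs to verify that $\{\tau_n\}$ satisfies the twisting system axiom. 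Geometricity of $A'$ then identifies $A^{\tau}$ with $\mathcal{A}(E',\sigma')=A'$, so $\Gr A\iso \Gr A^{\tau}\iso \Gr A'$ by Proposition~\ref{p:categorical-twist-eq}.

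The main obstacle, appearing symmetrically in both directions, is the bookkeeping that relates the index shift $n\mapsto n+1$ in the geometric intertwining condition to the index shift in the twisting system identity $\tau_m(ab)=\tau_m(a)\tau_{m+n}\tau_n^{-1}(b)$. On the algebraic side, one must check that twisting sends the relation space $R\subset V\otimes V$ to a new space cut out by the graph of $\sigma'$ under a uniformly shifted family of automorphisms; on the geometric side, one must construct from the sequence $\{\tau_n^{\ast}\}$ a coherent family of linear maps on all of $V$ that extends to $A$ compatibly with the relations. This extension is delicate because even for quadratic algebras the twisting system is determined by but not equal to the restriction to degree $1$, and the higher $\tau_n$ are constrained by the twisting identity rather than freely chosen---this is the technical core of Mori's argument and is where geometricity is essential.
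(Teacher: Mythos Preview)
The paper does not give its own proof of this theorem: it is quoted from Mori's paper and used as a black box. So there is nothing to compare your argument against line by line. That said, the paper's later discussion (the proof of Proposition~\ref{p:twist-of-regular-triple} and Remark~\ref{r:stablisation}) does describe the mechanism of Mori's proof, and your sketch is faithful to it: pass between $\Gr A\iso \Gr A'$ and the existence of a Zhang twist, dualize and projectivize $\tau_n|_{A_1}$ to obtain $\tau_n^{\ast}\in\Aut\mathbb{P}(V^{\ast})$, and read off $E'=\tau_0^{\ast}(E)$ and $\sigma'=(\tau_1^{\ast}|_E)\sigma(\tau_0^{\ast}|_E)^{-1}$ from the action on the multilinearized relations. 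Your identification of the index shift in the twisting identity with the shift in the intertwining condition is exactly the point.

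Two small corrections. First, you should not invoke Proposition~\ref{p:categorical-twist-eq} for the equivalence between $\Gr A\iso\Gr A'$ and twist-equivalence: that proposition is stated under the full hypotheses of Definition~\ref{Keeler} (Noetherian, AS-regular, etc.), which are strictly stronger than those in Theorem~\ref{t:Mori-full}. What you actually need, and what Mori uses, is the combination of \cite[Theorem~3.1]{Zhang} and \cite[Theorem~3.5]{Zhang}, which hold for connected graded algebras generated in degree~1. Second, in direction~(2) your phrase ``extend to degree-preserving $k$-linear isomorphisms $\tau_n\colon A\to A$ degree by degree'' elides the real construction: one first builds a twisting system on $T(V)$ by setting the degree-$d$ part of $\tau_n$ to be $T_n\otimes T_{n+1}\otimes\cdots\otimes T_{n+d-1}$ (this is automatically a twisting system on the tensor algebra), and then checks that it descends modulo the ideal $I$. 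It is precisely here that geometricity is used---the intertwining relation $(\tau_{n+1}^{\ast}|_E)\sigma=\sigma'(\tau_n^{\ast}|_E)$ says that $(T_n\otimes T_{n+1})(R)=R'$ on multilinearizations, hence the ideal is preserved in the required sense. Your final paragraph gestures at this but does not make the two-step structure (extend to $T(V)$, then descend) explicit; without it, the ``extend degree by degree'' step has no content, since the $\tau_n$ are not algebra maps.
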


  \noindent Diagrammatically, writing $\tau_{n}^{E}=(\tau_{n}^{\ast})|_{E}$,
  \begin{center}
\begin{tikzpicture}[node distance=1.5cm,on grid]
\node (a) at (0,0) {$E$};
\node (b) [below=of a] {$E'$};
\node (c) [right=of a] {$E$};
\node (d) [below=of c] {$E'$};
\node (e) [right=of c] {$E$};
\node (f) [below=of e] {$E'$};
\node (g) [right=of e] {$E$};
\node (h) [below=of g] {$E'$};
\node (i) [right=of g] {$E$};
\node (j) [below=of i] {$E'$};
\node (k) [right=of i] {};
\node (l) [below=of k] {};

\draw[->] (a) -- node [left] {$\tau_{0}^{E}$} (b);

\draw[->] (a) -- node [above] {$\sigma$} (c);
\draw[->] (b) -- node [below] {$\sigma'$} (d);
\draw[->] (c) -- node [left] {$\tau_{1}^{E}$} (d);

\draw[->] (c) -- node [above] {$\sigma$} (e);
\draw[->] (d) -- node [below] {$\sigma'$} (f);
\draw[->] (e) -- node [left] {$\tau_{2}^{E}$} (f);

\draw[->] (e) -- node [above] {$\sigma$} (g);
\draw[->] (f) -- node [below] {$\sigma'$} (h);
\draw[->] (g) -- node [left] {$\tau_{3}^{E}$} (h);

\draw[->] (g) -- node [above] {$\sigma$} (i);
\draw[->] (h) -- node [below] {$\sigma'$} (j);
\draw[->] (i) -- node [left] {$\tau_{4}^{E}$} (j);

\draw[->] (i) -- node [above] {$\sigma$} (k);
\draw[->] (j) -- node [below] {$\sigma'$} (l);

\end{tikzpicture}
\end{center}

  We note that if $A$, $A'$ are $\bN$-graded, the index $n$ may be taken to range over $\bN$ rather than $\bZ$, with no change to the conclusions.

As noted in Remark 4.9 of \cite{Mori}, one obtains from this result the well-known statement that, with notation as above, $A\iso A'$ as graded algebras if and only if there is an isomorphism $\tau\colon E\to E'$ which extends to an isomorphism $\bar{\tau} \colon \mathbb{P}(V^{\ast})\to \mathbb{P}((V')^{\ast})$ such that the diagram 
  \begin{center}
\begin{tikzpicture}[node distance=1.5cm,on grid]
\node (a) at (0,0) {$E$};
\node (b) [below=of a] {$E'$};
\node (c) [right=of a] {$E$};
\node (d) [below=of c] {$E'$};

\draw[->] (a) -- node [left] {$\tau$} (b);

\draw[->] (a) -- node [above] {$\sigma$} (c);
\draw[->] (b) -- node [below] {$\sigma'$} (d);
\draw[->] (c) -- node [left] {$\tau$} (d);

\end{tikzpicture}
\end{center}
  commutes.
  
We will insert the adjective ``geometric'' into our terminology for noncommutative projective spaces in the natural way, referring to $(\QGr A,\pi A, (1))$ as a geometric noncommutative $\pro{n}$ if $(\QGr A,\pi A, (1))$ is a noncommutative $\pro{n}$ for a geometric algebra $A$, and similarly refer to such an $A$ as a coordinate ring for a geometric noncommutative $\pro{n}$.

To conclude this section, we generalise a proposition of Artin--Tate--van den Bergh.  Specifically, in \S 8 of \cite{ATV2} the authors define the notion of twisting a 3-dimensional regular $\bZ$-graded algebra $A$ by a graded $A$-automorphism. This twisting process was later generalized by Zhang to that described above. They then define an action of graded $A$-automorphisms on the set of so-called regular triples and demonstrate a correspondence between twists of a given algebra and images of its regular triple under the action of $A$-automorphisms.  

In this proposition, we are able to remove the assumption on the dimension at the expense of requiring geometricity. We will also drop the term ``regular''. 

\begin{proposition}\label{p:twist-of-regular-triple} Let $A$ be a homogeneous coordinate ring for a geometric noncommutative $\pro{n}$ and let $(E,\sigma,\mathcal{L})$ be its associated triple.  Let $\tau$ be a twisting system for $A$.

By dualization and projectivization, there is an automorphism $\bar{\tau_{1}}^{\ast}$ of $\mathbb{P}(A_{1}^{\ast})$ induced by $\tau_{1}$ and $\bar{\tau_{1}}^{\ast}(E)=E$. Denote by $\tau_{1}^{E}$ the induced automorphism of $E$ and let $\sT_{\tau_{1}^{E}\sigma}$ be the triple $\paren{E,\tau_{1}^{E}\sigma,\cL}$. Then $A^{\tau}$ is the algebra determined (up to isomorphism) by $\sT_{\tau_{1}^{E}\sigma}$.
\end{proposition}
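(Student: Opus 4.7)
The plan is to generalize the proof of Proposition \ref{p:twist-of-3D-regular-triple} by computing the relations of $A^\tau$ explicitly and showing that their multilinearizations cut out the graph $(\mathrm{id},\bar{\tau_1}^{\ast})(\Gamma_2)$, where $\Gamma_2\subset\mathbb{P}(V^{\ast})\times\mathbb{P}(V^{\ast})$ is the graph of $\sigma$ on $E$. First I would make the twisted relations explicit. With $V=A_1=A^\tau_1$ and the star multiplication $y\star z = y\tau_h(z)$, a direct computation on generators shows that for each $f = \sum \alpha_{ij} x_i\otimes x_j \in R$ the element $(1\otimes \tau_1^{-1})(f)$ vanishes in $A^\tau$, and since $A^\tau$ is quadratic with relation space of the same dimension as $R$, one concludes $R^\tau = (1\otimes\tau_1^{-1})(R)$.

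Next, I would pass to multilinearizations. Using the identity $(\tau_1^{-1}x_k)(q) = x_k((\bar{\tau_1}^{\ast})^{-1}(q))$ coming from the natural pairing $V\times V^{\ast}\to k$, one obtains $\tilde g([p],[q]) = \tilde f([p],(\bar{\tau_1}^{\ast})^{-1}([q]))$ for $g = (1\otimes\tau_1^{-1})(f)$. Consequently the common vanishing locus $\Gamma_2^\tau = \mathcal{V}(R^\tau)$ equals $(\mathrm{id},\bar{\tau_1}^{\ast})(\Gamma_2) = \{([p],\bar{\tau_1}^{\ast}\sigma([p])) : [p]\in E\}$, using the geometricity of $A$ at the last step.

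To finish I would invoke Theorem \ref{t:Mori-full}(1): since $A$ is geometric and twist-equivalent to $A^\tau$, the twist is itself geometric with some triple $(E^\tau,\sigma^\tau,\mathcal{L}^\tau)$. The first projection of $\Gamma_2^\tau$ is $E$, forcing $E^\tau = E$; geometricity of $A^\tau$ then forces the second projection of $\Gamma_2^\tau$ also to equal $E^\tau = E$, so that $\bar{\tau_1}^{\ast}(E) = E$ and $\bar{\tau_1}^{\ast}$ restricts to an automorphism $\tau_1^E$ of $E$. Reading off $\Gamma_2^\tau$ as the graph of $\sigma^\tau$ then gives $\sigma^\tau = \tau_1^E\sigma$, and because $E$ is embedded into $\mathbb{P}(V^{\ast})$ via the same inclusion $j$, the line bundle $\mathcal{L}^\tau = j^{\ast}\mathcal{O}(1)$ coincides with $\mathcal{L}$.

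The main obstacle is precisely the step ensuring $\bar{\tau_1}^{\ast}(E) = E$: set-theoretically one only sees immediately that the graph $\Gamma_2^\tau$ projects to $E$ on the first factor and to $\bar{\tau_1}^{\ast}(E)$ on the second, and it is the geometric property of $A^\tau$ that promotes these to a single copy of $E$. Alternatively, one may normalize $\tau_0 = \mathrm{id}$ via Zhang's Proposition~2.4 to line up the starting point of Mori's sequence of automorphisms $\{\tau_n^{\ast}\}$ directly with the inclusion $j\colon E\hookrightarrow\mathbb{P}(V^{\ast})$, which yields the same conclusion.
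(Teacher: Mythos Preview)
Your argument is correct and follows essentially the same route as the paper's proof: both invoke Theorem~\ref{t:Mori-full} to ensure that $A^{\tau}$ is geometric, and both identify the triple of $A^{\tau}$ as $(E,\tau_{1}^{E}\sigma,\cL)$ after normalizing $\tau_{0}=\id$ via Zhang's Proposition~2.4. The only difference is one of packaging: the paper quotes from Mori's proof the formulas $E'=\bar{\tau_{0}}^{\ast}(E)$ and $\sigma'=\bar{\tau_{1}}^{\ast}\sigma(\bar{\tau_{0}}^{\ast})^{-1}$ and then sets $\tau_{0}=\id$, whereas you redo that computation by hand---writing $R^{\tau}=(1\otimes\tau_{1}^{-1})(R)$ and reading off $\Gamma_{2}^{\tau}=(\id\times\bar{\tau_{1}}^{\ast})(\Gamma_{2})$---which is exactly the content of the 3-dimensional argument in Proposition~\ref{p:twist-of-3D-regular-triple} carried over verbatim. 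Your version is slightly more self-contained (one need not open Mori's proof), while the paper's is shorter; the mathematics is the same.
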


\begin{proof} By Theorem~\ref{t:Mori-full}, $A^{\tau}$ is geometric and has an associated triple $(E',\sigma',\mathcal{L}')$.  Now since $A^{\tau}$ is a twist of $A$, there are graded linear isomorphisms $\phi_{m}\colon A^{\tau}\to A$ such that $\phi_{m}(a\ast_{A^{\tau}} b)=\phi_{m}(a)\ast_{A}\phi_{m+l}(b)$ for $a\in A^{\tau}_{l}$, where $\ast_{A^{\tau}}$ and $\ast_{A}$ are the multiplications in $A^{\tau}$ and $A$ respectively.  These graded linear isomorphisms are obtained via \cite[Proposition~2.8(1)]{Zhang} and a careful examination of the proof of that result - which is stated for an algebra $B$ \emph{isomorphic to} a twist of $A$ - shows that if $B$ is \emph{equal} to a twist of $A$, i.e.\ we take $B=A^{\tau}$ in Zhang's proposition, then in fact $\phi_{m}=\tau_{m}$.  (In general, $\phi_{m}=\tau_{m}\circ f$ for $f\colon B\to A^{\tau}$.)

  Consequently, in Mori's theorem (\ref{t:Mori-full}), the key to the proof of the first part is the dualization of the maps $\tau_{m}|_{A_{1}}$, namely $\bar{\tau_{m}}^{\ast}\colon \mathbb{P}(A_{1}^{\ast})\to \mathbb{P}(A_{1}^{\ast})$.  (Since the $\tau_{m}|_{A_{1}}$ are isomorphisms, it follows that the maps $(\tau_{m}|_{A_{1}})^{\ast}$ are also isomorphisms and hence, being injective, they descend to the projectivization of $A_{1}^{\ast}$.)

  Then Mori shows that $E'=(\bar{\tau_{0}}^{\ast}|_{E})(E)$ and $\sigma'=(\bar{\tau_{1}}^{\ast}|_{E}) \circ \sigma \circ (\bar{\tau_{0}}^{\ast}|_{E})^{-1}$.  As above, set $\tau_{1}^{E}=\bar{\tau_{1}}^{\ast}|_{E}$.

  In principle, $\tau_{0}$ can be any graded $k$-linear automorphism of $A$ and then these claims would be best possible.  However, by \cite[Proposition~2.4]{Zhang}, we may if necessary - with an acceptable loss of generality - replace $\tau$ by a twisting system $\tau'$ for which $\tau'_{0}=\id$, and have $A^{\tau}\iso A^{\tau'}$.

  Doing this, and bearing in mind that we have potentially introduced a ``hidden'' isomorphism in the course of doing so, we conclude that the triples $(E',\sigma',\cL')$ and $(E,\tau_{1}^{E}\circ \sigma,\cL)$, with $\tau_{1}^{E}\in \Aut E$, yield isomorphic algebras, one of these being $A^{\tau}$.
  \end{proof}

\begin{remark} We note that \cite[Proposition~2.6]{ItabaMatsuno} proves the same result in the special case of $\tau$ being algebraic.
  \end{remark}

\begin{remark}\label{r:stablisation} Mori's theorem also tells us how to construct the full twisting system.  For as in \cite[Remark~4.8]{Mori}, the condition $(\bar{\tau_{n+1}}^{\ast}|_{E})\sigma=\sigma'(\bar{\tau_{n}}^{\ast}|_{E})$ implies that given $\bar{\tau_{1}}^{\ast}$, we may inductively define
  \[ \bar{\tau_{n}}^{\ast}\defeq ((\sigma')^{n}\sigma^{-n})^{\ast}=((\tau_{1}^{E}\sigma)^{n}\sigma^{-n})^{\ast} \]
  and dualize and extend these from automorphisms of $\mathbb{P}(A_{1}^{\ast})$ to automorphisms of $T(A_{1})$, which will define a twisting system $\tau$ on $A$.

  We note two consequences.  The first is that the twisting system $\tau$ obtained is completely determined by $\tau_{1}$, corresponding to $A$ being generated in degree 1.

  Secondly, if $\tau_{1}^{E}$ commutes with $\sigma$ then the twisting system $\tau$ is algebraic: $\bar{\tau_{n}}^{\ast}=(\tau_{1}^{E})^{n}$ and hence $\tau_{n}=\tau_{1}^{n}$ for all $n$.  As in \cite[Proposition~8.8]{ATV2} this is precisely the situation of $A^{\tau}$ being a twist by an automorphism.  Conversely, non-algebraic twists arise when we can find some $\tau_{1}^{E}$ that does not commute with $\sigma$ in $\Aut(E)$.
\end{remark}

Let $A$ and $B$ be coordinate rings of noncommutative $\pro{n}$s, so that $\paren{\QGr A,\pi A, (1)}$ and $\paren{\QGr B,\pi B, (1)}$ are noncommutative $\pro{n}$s.  Let $\cF\colon \QGr A \to \QGr B$ be an equivalence satisfying the condition \eqref{SSS}, fixing isomorphisms $\cF((\pi A)(m))\to (\pi B)(m)$ for all $m$.  Then \cite[Theorem~3.3]{Zhang} provides a construction of a family of maps $\phi=\{ \phi_{n}\}$ satisfying $\phi_{m}(ab)=\phi_{m}(a)\phi_{m+n}(b)$ for all $a\in B_{m}$ and $b\in B_{n}$. From this, define $\tau=\{ \tau_{m}=\phi_{m}\phi_{0}^{-1}\}$, a twisting system for $A$ (\cite[Proposition~2.8]{Zhang}); then $B\iso A^{\tau}$. 

\begin{definition}\label{d:geom-sys} With notation as in the previous paragraph, let $\bar{\tau}=\{ \bar{\tau_{n}}^{\ast}\colon \mathbb{P}(A_{1}^{\ast})\to \mathbb{P}(A_{1}^{\ast}) \}$ be the family of isomorphisms obtained from $\tau$ by dualization and projectivisation.  We will call $\bar{\tau}$ a \emph{geometric system} associated to $\cF$.
\end{definition}

The geometric system depends on the choice of isomorphisms in \eqref{SSS} if two choices of isomorphisms yield $\tau$ and $\tau'$ respectively, then both $A^{\tau}$ and $A^{\tau'}$ are isomorphic to $B$ and hence each other.  In what follows, we will only wish to work up to isomorphism of algebras so this ambiguity does not trouble us.

\section{Definition of $\ncpn$ for geometric noncommutative $\pro{n}$s}\label{s:ncpn}

We now define our main object of study, the groupoid $\mathscr{NC}\paren{\pro{n}}$:

\begin{definition}

Let $\mathscr{NC}\paren{\pro{n}}$ be the category whose objects are the triples $\paren{\QGr A, \pi A, (1)}$ where $A$ is a geometric noncommutative $\pro{n}$ and whose morphisms are pairs $(\cF,t)$ with $\cF:\QGr A\to\QGr B$ an equivalence of categories and $t=\{t_{m}\}$ a family of isomorphisms $t_{m}\colon\mathcal{F}\paren{\pi A(m)}\to\pi B(m)$ for all $m\in\bN$.
\end{definition}

By Proposition~\ref{p:categorical-twist-eq}, the groupoid $\ncpn$ partitions into connected components corresponding precisely to all of the Zhang twists of any given algebra $A$ whose triple $\paren{\QGr A, \pi A, (1)}$ is in that component. For example, for $n\geq 2$, $\mathcal{O}(\pro{n})=k\sqbrac{x_{0},x_{1},\ldots,x_{n}}$ and $\mathcal{O}_{q}(\pro{n}_{k})$ correspond to triples in different connected components. Henceforth, we call the component containing $(\QGr\cO(\pro{n}), \pi\cO(\pro{n}), (1))$ the \emph{commutative component}.

Our aim is to attempt to understand these connected components, to systematically introduce the study of generalized automorphisms into this strand of noncommutative geometry, in keeping with the classical literature of Artin, Stafford, Tate, Van den Bergh, Zhang et al. and the more recent work of Pym (\cite{Pym}).

We now adopt the calligraphic font $\mathcal{A}\defeq \paren{\QGr A, \pi A,(1)}$ for objects of $\ncpn$. Given an object $\mathcal{A}$, we wish to focus attention on the morphisms to $\mathcal{A}$. The connected components of $\ncpn$ do not in general have a terminal object but, for a given $\mathcal{A}$, we can form the slice category $\ncpn\slash\mathcal{A}$, whose terminal object is the morphism $\text{id}_{\mathcal{A}}:\mathcal{A}\rightarrow\mathcal{A}$. The objects of $\ncpn\slash\mathcal{A}$ are maps $\mathcal{B}\rightarrow\mathcal{A}$ and the morphisms are the triangles:
\vspace{-1em}
\begin{center}
\begin{tikzpicture}[node distance=1cm,on grid]
\node (B) at (0,0) {$\mathcal{B}$};
\node (C) [right=of B] {$\mathcal{C}$};
\node (A) [below=of B,xshift=0.5cm,yshift=0cm] {$\mathcal{A}$};
\draw[->] (B) -- node [above] {$f$} (C);
\draw[->] (B) -- node [above] {} (A);
\draw[->] (C) -- node [above] {} (A);
\end{tikzpicture}
\end{center}
where $f$ is a morphism in $\ncpn$.

The slice category $\ncpn/\mathcal{A}$ comes with a forgetful functor $\Phi$ to $\ncpn$, given on objects by taking the domain, $\Phi(\mathcal{B}\to \mathcal{A})=\mathcal{B}$ and on morphisms by taking the map $f$ in the triangle above.  In our situation, since $\ncpn$ is a groupoid, one can easily show that this functor is full.  However, it is not faithful; $\ncpn$ and its slices are closely related but do not hold identical information.  Part of the reason for this is that ``unique'' is a very strong statement in this set-up whereas ``unique up to isomorphism'' is very weak.

In order to study the groupoid $\ncpn$ and its slices, we will introduce a groupoid whose objects are triples and a functor from $\ncpn$ to this groupoid.  We will then construct a functor from this groupoid to the categorical analogue of a set with a group action, a so-called action groupoid, and thereby identify the structure that generalises the action of an automorphism group acting on an algebra to include the ``action'' of (non-algebraic) Zhang twists.

Throughout this section, $\cA$ is assumed to be a geometric noncommutative $\pro{n}$.

\begin{definition}\label{d:gtpn} Let $\gtpn$ be the category whose objects are triples $\cT=(E,\sigma,\cL)$ such that there exists $\cA \in \ncpn$ with triple $\cT$, and whose morphisms are as follows: for $\cT=(E,\sigma,\cL)$, $\cT'=(E',\sigma',\cL')$ define
  \[ \Hom_{\gtpn}(\cT,\cT')=\{ \bar{\tau}=\{ \tau_{i} \mid i\in \bN \} \mid \tau_{i}\in \Aut(\pro{n}), \tau_{i}^{E}\defeq \tau_{i}|_{E}\colon E\to E'\ \text{is an isomorphism and}\ \tau_{i+1}^{E}\sigma=\sigma'\tau_{i}^{E}\}. \]
\end{definition}

Composition is given by $\bar{\tau}'\circ \bar{\tau}=\{ \tau_{i}'\circ \tau_{i} \}$.  That is, morphisms in $\gtpn$ precisely correspond to the geometric systems of Definition~\ref{d:geom-sys} (i.e.\ the families of automorphisms associated to twists in Mori's theorem) and each such morphism gives rise to a commuting diagram
  \begin{center}
\begin{tikzpicture}[node distance=1.5cm,on grid]
\node (a) at (0,0) {$E$};
\node (b) [below=of a] {$E'$};
\node (c) [right=of a] {$E$};
\node (d) [below=of c] {$E'$};
\node (e) [right=of c] {$E$};
\node (f) [below=of e] {$E'$};
\node (g) [right=of e] {$E$};
\node (h) [below=of g] {$E'$};
\node (i) [right=of g] {$E$};
\node (j) [below=of i] {$E'$};
\node (k) [right=of i] {};
\node (l) [below=of k] {};

\draw[->] (a) -- node [left] {$\tau_{0}^{E}$} (b);

\draw[->] (a) -- node [above] {$\sigma$} (c);
\draw[->] (b) -- node [below] {$\sigma'$} (d);
\draw[->] (c) -- node [left] {$\tau_{1}^{E}$} (d);

\draw[->] (c) -- node [above] {$\sigma$} (e);
\draw[->] (d) -- node [below] {$\sigma'$} (f);
\draw[->] (e) -- node [left] {$\tau_{2}^{E}$} (f);

\draw[->] (e) -- node [above] {$\sigma$} (g);
\draw[->] (f) -- node [below] {$\sigma'$} (h);
\draw[->] (g) -- node [left] {$\tau_{3}^{E}$} (h);

\draw[->] (g) -- node [above] {$\sigma$} (i);
\draw[->] (h) -- node [below] {$\sigma'$} (j);
\draw[->] (i) -- node [left] {$\tau_{4}^{E}$} (j);

\draw[->] (i) -- node [above] {$\sigma$} (k);
\draw[->] (j) -- node [below] {$\sigma'$} (l);

\end{tikzpicture}
\end{center}
Composition of geometric systems $\bar{\tau}$ is the natural one such that on the associated diagrams, composition is given by stacking diagrams vertically and composing the constituent maps.  It is important to note, however, that $\tau_{i}^{E}$ may not have a unique extension to $\pro{n}$ and so the diagram above may contain less information than the system $\bar{\tau}$.

It is straightforward to verify that $\gtpn$ is a groupoid, with $\bar{\tau}^{-1}=\{ \tau_{i}^{-1} \}$.  We will refer to $\gtpn$ as the groupoid of geometric triples of noncommutative $\pro{n}$s.

Let $\cT=(E,\sigma,\cL)$ and $\cT'=(E',\sigma',\cL')$ be objects in $\gtpn$.  We note that the existence of a morphism $\cT \to \cT'$ in $\gtpn$ implies the following (strong) relationship between the point schemes $E$ and $E'$: namely, there exists an automorphism $\tau_{0}\in \Aut \pro{n}$ such that $\tau_{0}$ maps $E$ isomorphically to $E'$.  That is, $E'$ and $E$ are not only isomorphic as abstract schemes, but they are isomorphic via an automorphism of the ambient projective space; expressed in terms of the associated line bundles, this means that $\cL'\iso \tau_{0}^{*}\cL$.

\begin{theorem} There is a functor $\cT\colon \ncpn \to \gtpn$ defined on objects by $\cA \mapsto \cT(\cA)$ (that is, $\cT$ sends each geometric noncommutative $\pro{n}$ $\cA$ to its associated geometric triple) and on morphisms by $(\cF,t) \mapsto \bar{\tau}$ where $\bar{\tau}$ is the geometric system associated to $\cF$ and $t$.
\end{theorem}

\begin{proof} The functor $\cT$ is well-defined by the discussion before Definition~\ref{d:geom-sys}, explaining the construction of the geometric system associated to the choice of $\cF$ and $t$.
  We need to show that the construction in \cite[Theorem~3.3]{Zhang} of the family $\phi=\{ \phi_{n} \}$ of graded linear isomorphisms associated to a twisting functor is (contravariantly) functorial.  Then since $\bar{\tau}$ is constructed from $\phi$ by dualization, which is also contravariant, we will conclude that the functor $\cT$ is (covariantly) functorial.

  Let $\cF \colon \QGr A \to \QGr B$ and $\cG \colon \QGr B \to \QGr C$ be morphisms in $\ncpn$.  Let $F\colon \Gr A \to \Gr B$ and $G\colon \Gr B \to \Gr C$ be the corresponding isomorphisms, noting that we have isomorphisms $t_{n}\colon F(A(n)) \to B(n)$ and $u_{n}\colon G(B(n)) \to C(n)$ for all $n$ (cf.\ \cite[Theorem~3.4]{Zhang}).  Hence there exist isomorphisms $v_{n}\defeq u_{n}\circ (Gt_{n})\colon (G\circ F)(A(n)) \to C(n)$.

  In order to match the notation of the proof of \cite[Theorem~3.3]{Zhang}, let us write $S_{n}$ for all shift functors acting on objects and morphisms in the graded module categories at hand.  Then, as in that proof, we have associated to $F$ the family of maps
  \begin{align*} & \phi_{n}\colon \bigoplus_{m} \Hom_{\Gr B}(S_{m}(B),B) \to \bigoplus_{m} \Hom_{\Gr A}(S_{m}(A),A), \\ & \phi_{n}(a)=(S_{n})^{-1}(F^{-1}(t_{n}^{-1}S_{n}(a)t_{m+n}))\ \text{for all}\ a\in \Hom_{\Gr B}(S_{m}(B),B).
    \end{align*}
  Let $\phi_{n}'$ (respectively $\phi_{n}''$) be the family of maps constructed in the same way from $G$ (respectively $G\circ F$).

  For $a\in \Hom_{\Gr C}(S_{n}(C),C)$, we have
  \begin{align*} \phi_{n}''(a) & = (S_{n})^{-1}((G\circ F)^{-1}(v_{n}^{-1}S_{n}(a)v_{m+n})) \\
    & = (S_{n}^{-1})(F^{-1}(G^{-1}((Gt_{n}^{-1}\circ u_{n}^{-1})S_{n}(a)(u_{m+n}\circ Gt_{m+n}^{-1})))) \\
    & = (S_{n}^{-1})(F^{-1}(t_{n}^{-1}G^{-1}(u_{n}^{-1}S_{n}(a)u_{m+n})t_{m+n}^{-1})) \\
    & = (S_{n}^{-1})(F^{-1}(t_{n}^{-1}(S_{n}(S_{n}^{-1}(G^{-1}(u_{n}^{-1}S_{n}(a)u_{m+n}))))t_{m+n}^{-1})) \\
    & = (S_{n}^{-1})F^{-1}(t_{n}^{-1}S_{n}(\phi_{n}'(a))t_{m+n}^{-1}) \\
    & = \phi_{n}(\phi_{n}'(a)) = (\phi_{n}\circ \phi_{n}')(a),
  \end{align*}
  as required.
\end{proof}

Note that, by the definition of $\gtpn$ (Definition~\ref{d:gtpn}), the functor $\cT\colon \ncpn \to \gtpn$ is full and surjective (not just essentially surjective) but it is not necessarily faithful. It is full because $\gtpn$ consists of only those triples arising as the triple of a geometric noncommutative $\pro{n}$.

The functor $\cT$ between the groupoids $\ncpn$ and $\gtpn$ induces in a natural way a functor between the slice groupoids $\ncpn/\cA$ and $\gtpn/\cT(\cA)$.  For it is straightforward to verify\footnote{These claims apply to any functor between two categories; they do not use any properties of the categories at hand.} that $\cT_{\cA}\colon \ncpn/\cA \to \gtpn/\cT(\cA)$ defined
\begin{itemize}
\item on objects of the slice category $\ncpn/\cA$ by $\cT_{\cA}(\cF\colon \cB \to \cA)=(\cT\cF\colon \cT\cB \to \cT\cA)$ and
\item on morphisms of the slice category, which are commuting triangles, by the image triangle under $\cT$
\end{itemize}
is a functor.  Since $\cT$ is full, $\cT_{\cA}$ is both surjective on objects and full.  Note that if $\cT$ were faithful, $\cT_{\cA}$ would be also.

The functor $\cT_{\cA}$ gives rises to a surjective function $t_{\cA}$ with domain the objects of $\ncpn/\cA$ and codomain the objects of $\gtpn/\cT(\cA)$.  (It is elementary to check that in a slice groupoid $\cG/X$, we have $|\Hom_{\cG/X}(f,f')|=1$ for all objects $f,f'$ of $\cG/X$.  This implies that to understand a slice groupoid, the principal task is to understand its objects, in contrast to the more common situation where attention is focused on understanding morphisms.)

Now the (connected) groupoid $\gtpn/\cT(\cA)$ has a natural distinguished object, namely the identity morphism $\bar{\id_{\cT(\cA)}}\colon \cT(\cA) \to \cT(\cA)$ given by $(\bar{\id_{\cT(\cA)}})_{i}=\id_{\pro{n}}$.  We can then consider the pre-image of this object under $t_{\cA}$: we set $K_{\cA}=t_{\cA}^{-1}(\bar{\id_{\cT(\cA)}})$.  Then $\cF\colon \cB \to \cA$ belongs to $K_{\cA}$ if and only if $\cT\cF\colon \cT\cB\to \cT\cA$ is equal to $\bar{\id_{\cT(\cA)}}$, which in particular implies that $\cT\cB=\cT\cA$.

In the groupoid $\gtpn$, we have a distinguished class of morphisms, as follows.  Let us say that $\bar{\tau}=\{ \tau_{n} \} \in \Hom_{\gtpn}(\cT,\cT')$ is \emph{constant} if $\tau_{i}=\tau_{j}$ for all $i,j$.  Let $\text{Con}(\cT,\cT')$ be the subset of $\Hom_{\gtpn}(\cT,\cT')$ of constant morphisms.  Note that $\bar{\id_{\cT}}$, $\bar{\id_{\cT}}_{i}=\id_{\pro{n}}$, is an example of a constant morphism, and that $\text{Con}(\cT,\cT')$ may be empty (if $\cT\neq \cT'$).

Then one sees that there is a subcategory $\sC(\pro{n})$ with the same objects as $\gtpn$ and morphism sets $\text{Con}(\cT,\cT')$, with composition being the restriction of composition in $\gtpn$.  Indeed, $\sC(\pro{n})$ is again a groupoid.

The significance of the constant morphisms is explained by the following proposition.

\begin{proposition}[{cf.\ \cite[Remark~4.9]{Mori}}]\label{p:const-morph-iso} Let $A$ and $B$ be coordinate rings of noncommutative $\pro{n}$s, so that $\cA=\paren{\QGr A,\pi A, (1)}$ and $\cB=\paren{\QGr B,\pi B, (1)}$ are noncommutative $\pro{n}$s. Let $\cT=\cT(\cA)$ and $\cT'=\cT(\cB)$ be the corresponding geometric triples.

  Then there exists $\bar{\tau}\in \text{Con}(\cT,\cT')$ if and only if $A$ is isomorphic to $B$, as graded algebras.
\end{proposition}

\begin{proof} This is a rewriting in our terminology of \cite[Remark~4.9]{Mori}, which was, as noted there, already well-known; the point here and in what follows is to make very explicit the relationship between isomorphism and twist-equivalence. The forwards implication is proved by seeing that the twisting system $\{ \theta_{n}\}$ such that $B\iso A^{\theta}$ arising from $\bar{\tau}$ is given by $\theta_{n}=(\tau_{0}^{-1}\tau_{n})^{\ast}$ (cf. \cite[Remark~4.8]{Mori}) so $\theta_{n}$ is the identity for all $n$ if $\bar{\tau}$ is constant.
  \end{proof}

An immediate corollary of this is that functors $\cF\colon \cB\to \cA$ that belong to $K_{\cA}$ are twists that yield isomorphic algebras.  For being in $K_{\cA}$, $\cT\cF=\bar{\id_{\cT(\cA)}}$ so that the geometric system associated to $\cF$ is constant.  As such, our focus from this point will be on understanding $\gtpn/\cT$.

Let $\cT=(E,\sigma,\cL)$, $\cT'=(E',\sigma',\cL')$ be triples in $\gtpn$.  Let $\bar{\tau}\colon \cT'\to \cT$ be a morphism in $\Hom_{\gtpn}(\cT',\cT)$ (or equivalently, an object in $\gtpn/\cT$), so $\bar{\tau}=\{ \tau_{i} \mid i\in \nat\}$.  Then let us form the constant morphism generated by $\tau_{0}$: define $\bar{\tau_{0}}$ to be the morphism $\bar{\tau_{0}}\colon \cT'\to \cT$ with $(\bar{\tau_{0}})_{i}=\tau_{0}$ for all $i$, so $\bar{\tau_{0}} \in \text{Con}(\cT',\cT)$.  As before, we write $\tau_{i}^{E}$ for $\tau_{i}|_{E}$.

Consider the triple $\cT''=(E,\tau_{0}^{E}(\tau_{1}^{E})^{-1}\sigma,\cL)$.  Then $\bar{\nu}\colon \cT''\to \cT$ defined by $\bar{\nu}_{i}=\tau_{i}\tau_{0}^{-1}$ is a morphism in $\gtpn$, since
\begin{align*} \nu_{i+1}^{E}(\tau_{0}^{E}(\tau_{1}^{E})^{-1})\sigma & = \tau_{i+1}^{E}(\tau_{0}^{E})^{-1}(\tau_{0}^{E}(\tau_{1}^{E})^{-1})\sigma \\
  & = \tau_{i+1}^{E}(\tau_{1}^{E})^{-1}\sigma \\
  & = \tau_{i+1}^{E}(\sigma'(\tau_{0}^{E})^{-1}) \\
  & = \sigma\tau_{i}^{E}(\tau_{0}^{E})^{-1} \\
  & = \sigma\nu_{i}^{E}
\end{align*}
and evidently has the property that $\bar{\nu}=\bar{\tau}\circ\bar{\tau_{0}}^{-1}$.  That is, the algebras represented by the triples $\cT'$ and $\cT''$ are isomorphic.  

This observation may be generalised to the following result, where we see that we can instead fix $\cT$ and the equivalent of $\tau_{0}$, we can let $\cT'$ be determined by these, and still obtain constant morphisms of triples.

\begin{proposition}\label{p:PGL-action-on-GT-slice-T} Fix $\cT=(E,\sigma,\cL)$. There is a homomorphism $\rho\colon \op{\Aut(\pro{n})} \to \Aut(\gtpn/\cT)$ sending $\tau\in \Aut(\pro{n})$ to the isomorphism of categories $\rho(\tau)\colon \gtpn/\cT \to \gtpn/\cT$ defined on objects as follows. Let $\bar{\nu}\colon \cT' \to \cT$ be an object of $\gtpn/\cT$ with $\cT'=(E',\sigma',\cL')$. Then define a constant geometric system (depending on $\tau$ and $\bar{\nu}$) by setting $\tau^{E'}=\tau|_{E'}$ and defining
    \begin{equation*}\label{eq:induced-morphism} \rho(\tau,\bar{\nu})\colon (E',\sigma',\cL') \to (\tau(E'),\tau^{E'}\circ \sigma' \circ (\tau^{E'})^{-1},((\tau^{E'})^{-1})^{*}\cL')
    \end{equation*}
by $\rho(\tau,\bar{\nu})_{i}=\tau$ for all $i$.  

Then set 
  \[ \rho(\tau)(\bar{\nu})=\bar{\nu} \circ \rho(\tau,\bar{\nu})^{-1} \]

On morphisms, $\rho(\tau)$ is given by
\begin{center}
  \begin{tikzpicture}[node distance=2.5cm,on grid]
    \begin{scope}[xshift=-5cm,yshift=-0.25cm]
      \node (B) at (0,0) {$(E',\sigma',\cL')$};
\node (C) [right=of B] {$(E'',\sigma'',\cL'')$};
\node (A) [below=of B,xshift=1.25cm,yshift=0.75cm] {$(E,\sigma,\cL)$};
\draw[->] (B) -- node [above] {$\bar{\mu}$} (C);
\draw[->] (B) -- node [left,xshift=-0.25cm] {$\bar{\nu}$} (A);
\draw[->] (C) -- node [right] {$\bar{\nu'}$} (A);
    \end{scope}
    \node (circ) at (-0.75,-1.25) {$\mapsto$};
    \begin{scope}[xshift=2.5cm,yshift=-1.25cm]
            \node (B) at (0,0) {$(E',\sigma',\cL')$};
\node (A) [below right=of B] {$(E,\sigma,\cL)$};
\node (C) [above right=of A] {$(E'',\sigma'',\cL'')$};
\node (D) [above left=of B] {$(\tau(E'),(\sigma')^{\tau^{E'}},((\tau^{E'})^{-1})^{*}\cL')$};
\node (E) [above right=of C] {$(\tau(E''),(\sigma'')^{\tau^{E''}},((\tau^{E''})^{-1})^{*}\cL'')$};
\draw[->,dashed] (B) -- node [above] {$\bar{\mu}$} (C);
\draw[->] (B) -- node [left,xshift=-0.25cm] {$\bar{\nu}$} (A);
\draw[->] (C) -- node [right] {$\bar{\nu'}$} (A);
\draw[->] (D) -- node [left,xshift=-0.25cm] {$\rho(\tau,\bar{\nu})^{-1}$} (B);
\draw[->] (D) -- node [above,yshift=0.2cm] {$\rho(\tau,\bar{\nu'})\circ \bar{\mu} \circ \rho(\tau,\bar{\nu})^{-1}$} (E);
\draw[->] (E) -- node [right] {$\rho(\tau,\bar{\nu'})^{-1}$} (C);
\end{scope}
\end{tikzpicture}
  \end{center}
  where $(\sigma')^{\tau^{E'}}\defeq \tau^{E'}\circ \sigma' \circ (\tau^{E'})^{-1}$.
\end{proposition}

\begin{proof}
  This is a straightforward verification.
\end{proof}

\begin{lemma}\label{l:rho-is-free} In the setting of the previous Proposition, we have
  \begin{enumerate}[label=(\alph*)]
  \item for all $\bar{\nu}\in \gtpn/\cT$,
    \[ \mathrm{Stab}_{\rho}(\bar{\nu})\defeq \{ \tau \in \Aut(\pro{n}) \mid \rho(\tau)(\bar{\nu})=\bar{\nu} \} \]
    is trivial; and
  \item the kernel of $\rho$ is trivial.
  \end{enumerate}
\end{lemma}

\begin{proof} From the definition of $\rho$, we see that we have $\rho(\tau)(\bar{\nu})=\bar{\nu}$ if and only if $\bar{\nu}=\bar{\nu}\circ \rho(\tau,\bar{\nu})^{-1}$, as objects in $\gtpn/\cT$.  Viewing them as morphisms in $\gtpn$, $\bar{\nu}$ is invertible, so this is equivalent to $\rho(\tau,\bar{\nu})=\id_{\cT'}$ as morphisms in $\gtpn$.  (Note that $\rho(\tau,\bar{\nu})$ is not an object in $\gtpn/\cT$.)

  Now $\id_{\cT'}$ is precisely the constant geometric system $\bar{\id_{\pro{n}}}$ obtained from $\id_{\pro{n}}\in \Aut(\pro{n})$, i.e. with $\bar{\id_{\pro{n}}}_{i}=\id_{\pro{n}}$ for all $i$, with the restrictions $\id_{\pro{n}}|_{E'}$ being equal to $\id_{E'}$.  Since $\rho(\tau,\bar{\nu})$ is the constant geometric system obtained from $\tau$, we deduce that $\rho(\tau,\bar{\nu})=\id_{\cT'}$ implies $\tau=\id_{\pro{n}}$.  Hence $\mathrm{Stab}_{\rho}(\bar{\nu})=\{ \id_{\pro{n}} \}$.

  It immediately follows that $\ker \rho = \bigcap_{\bar{\nu}} \mathrm{Stab}_{\rho}(\bar{\nu})$ is trivial also.
\end{proof}

\begin{proposition} Let $(\bar{\nu}\colon \cT'\to \cT)\in \gtpn/\cT$.  Define
  \[ \mathrm{Orb}_{\rho}(\bar{\nu})\defeq \{ \rho(\tau)(\bar{\nu}) \mid \tau \in \Aut(\pro{n}) \}. \]
  Then there is a bijection 
  \[ \phi \colon \mathrm{Orb}_{\rho}(\bar{\nu}) \to \bigcup_{\cT''\in \gtpn} \mathrm{Con}(\cT'',\cT'). \]
\end{proposition}

\begin{proof} Let $\rho(\tau)(\bar{\nu})\in \mathrm{Orb}_{\rho}(\bar{\nu})$.  By definition, we have $\rho(\tau)(\bar{\nu})=\bar{\nu}\circ \rho(\tau,\bar{\nu})^{-1}$.  We define
  \[ \phi(\rho(\tau)(\bar{\nu}))=\bar{\nu}^{-1}\circ \rho(\tau)(\bar{\nu})=\rho(\tau,\bar{\nu})^{-1}, \] noting that by construction $\rho(\tau,\bar{\nu})^{-1}$ is a constant geometric system whose codomain is $\cT'$, the domain of $\bar{\nu}$.

  Conversely, let us take a constant geometric system whose codomain is $\cT'$ and construct a suitable map to $\mathrm{Orb}_{\rho}(\bar{\nu})$.  Specifically, let $\psi\colon \bigcup_{\cT''\in \gtpn} \mathrm{Con}(\cT'',\cT') \to \mathrm{Orb}_{\rho}(\bar{\nu})$ be defined as follows.  For $\bar{\mu}\in \mathrm{Con}(\cT'',\cT')$, set $\psi(\bar{\mu})=\bar{\nu}\circ \bar{\mu}$.  We need to show that $\bar{\nu}\circ \bar{\mu}\in \mathrm{Orb}_{\rho}(\bar{\nu})$, which reduces to showing that there exists $\tau \in \Aut(\pro{n})$ such that $\bar{\mu}=\rho(\tau,\bar{\nu})^{-1}$.

  Now $\bar{\mu}$ is a constant geometric system, so is given by $\bar{\mu}=\{ \mu_{i} \}$ with $\mu_{i}\in \Aut(\pro{n})$ and furthermore $\mu_{i}=\mu_{j}$ for all $i,j$; let $\mu=\mu_{0}$ ($=\mu_{1}=\dotsc $). We must have that $\cT''=(E'',\sigma'',\cL'')$ satisfies $\mu(E'')=E'$, $\sigma''=\mu_{E''}^{-1}\circ \sigma' \circ \mu_{E''}=(\sigma')^{\mu_{E''}^{-1}}$ and $\cL''=\mu^{*}\cL'$, by the definition of a geometric system.

Hence set $\tau=\mu^{-1}$ so that $\tau_{E'}=(\mu_{E''})^{-1}$ and
  \[ \rho(\tau,\bar{\nu})\colon \cT' \to (\tau(E'),(\sigma')^{\tau_{E'}},(\tau_{E'}^{-1})^{*}\cL')=(E'',\sigma'',\cL''), \]
  with $(\rho(\tau,\bar{\nu})^{-1})_{i}=\tau^{-1}=\mu=(\bar{\mu}_{i})$ for all $i$. So $\bar{\mu}=\rho(\mu^{-1},\bar{\nu})^{-1}$ as required.
  
  It is straightforward to see that $\phi$ and $\psi$ are mutually inverse.
\end{proof}

We claim that in each orbit there is a canonical element whose domain is a geometric triple such that the point scheme in this triple is equal to (and not just isomorphic) to $E$, the point scheme in the chosen triple $\cT$.

\begin{lemma} Let $\mathrm{Orb}_{\rho}(\bar{\nu})$ be the $\rho$-orbit of $\bar{\nu}$ as defined previously.  Then there exists a unique element $\bar{o}\in \mathrm{Orb}_{\rho}(\bar{\nu})$ with $\bar{o}\colon (E,\sigma',\cL')\to (E,\sigma,\cL)$ such that $\bar{o}_{0}=\id_{\pro{n}}$.
\end{lemma}

\begin{proof} Let $\bar{\mu}$ be the constant geometric system with $\mu_{i}=\nu_{0}^{-1}$ for all $i$.  Then $\bar{\nu}\circ \bar{\mu}\in \mathrm{Orb}_{\rho}(\bar{\nu})$ and
  \[ (\bar{\nu}\circ \bar{\mu})_{0}=\nu_{0}\circ \mu_{0} = \nu_{0}\circ \nu_{0}^{-1}=\id_{\pro{n}} \]
  so that we may choose $\bar{o}=\bar{\nu}\circ \bar{\mu}$, giving existence.

  For uniqueness, if $\bar{\nu'}\in \mathrm{Orb}_{\rho}(\bar{\nu})$ is such that $\bar{\nu'}_{0}=\id_{\pro{n}}$ then there exists a constant geometric system $\bar{\mu}$ such that $\bar{\nu'}=\bar{\nu}\circ \bar{\mu}$.  Then $\nu_{0}\circ \mu_{0}=\id_{\pro{n}}$, so that $\mu_{0}=\nu_{0}^{-1}$.  Since $\bar{\mu}$ is constant, $\mu_{i}=\nu_{0}^{-1}$ for all $i$ and $\bar{\nu'}=\bar{o}$, hence uniqueness.
\end{proof}

Let us call the element $\bar{o}\in \mathrm{Orb}_{\rho}(\bar{\nu})$ the \emph{standard representative} of $\mathrm{Orb}_{\rho}(\bar{\nu})$.  Since orbits partition, it immediately follows that every geometric system $\bar{o'}$ with $\bar{o'}_{0}=\id_{\pro{n}}$ is the standard representative of exactly one orbit, $\mathrm{Orb}_{\rho}(\bar{o'})$.

Let $E$ be a closed subscheme of $\pro{n}$ and define the following two subgroups:
\begin{align*}
  \Aut(E \uparrow \pro{n}) & \defeq \{ \sigma \in \Aut E \mid (\exists \hat{\sigma}\in \Aut \pro{n})(\hat{\sigma}|_{E}=\sigma) \} \subseteq \Aut E, \\
  \Aut(\pro{n} \downarrow E) & \defeq \{ \rho \in \Aut \pro{n} \mid \rho(E)=E \} \subseteq \Aut \pro{n}.
\end{align*}
Restriction defines a surjective group homomorphism $\mathrm{Res}_{E}\colon \Aut(\pro{n} \downarrow E) \to \Aut(E \uparrow \pro{n})$.

Consider an orbit $\mathrm{Orb}_{\rho}(\bar{\nu})$ and let $\bar{o}\colon (E,\sigma',\cL')\to (E,\sigma,\cL)$ be its standard representative, i.e.\ the unique element with $\bar{o}_{0}=\id_{\pro{n}}$, as in the lemma.  We continue with our notation $o_{i}^{E}=o_{i}|_{E}$. Then, from the definition of geometric systems, we have that
$o_{r}^{E}\sigma'=\sigma o_{r-1}^{E}$ yields $o_{r}^{E}=\sigma^{r-1}(o_{1}^{E}\sigma^{-1})^{r-1}o_{1}^{E}\in \Aut(E \uparrow \pro{n})$
so $\bar{o}$ is \emph{generated in degree 1}, i.e.\ $o_{r}^{E}$ is a function of $o_{1}^{E}$ and $\sigma$, for all $r$.  Notice too that if $o_{1}^{E}$ commutes with $\sigma$, the formula for $o_{r}^{E}$ simplifies to $o_{r}^{E}=(o_{1}^{E})^{r}$.

Recalling that our aim is to parameterize twisting systems of an algebra, via geometric systems, by group-theoretic data associated to the point scheme, we now define a function as follows.

\begin{definition} Let $\cP=\Aut{\pro{n}}$ and
  \[ (\gtpn/\cT) \git \cP = \{ \mathrm{Orb}_{\rho}(\bar{\nu}) \mid \bar{\nu}\in \gtpn/\cT \} \]
  be the set of $\rho$-orbits.  Define a function $\Sigma\colon (\gtpn/\cT) \git \cP \to \Aut(E \uparrow \pro{n})\sigma$ by $\Sigma(\mathrm{Orb}_{\rho}(\bar{\nu}))=(o_{1}^{E})^{-1}\sigma$, for $o_{1}^{E}=\bar{o}_{1}|_{E}$, $\bar{o}$ the standard representative of $\mathrm{Orb}_{\rho}(\bar{\nu})$.
\end{definition}

It follows from the results of this section that this function is well-defined.

\emph{A priori}, this function discards information, but this is a necessary compromise.  Note that by defining this function on $\rho$-orbits, we are (as shown immediately prior to Proposition~\ref{p:PGL-action-on-GT-slice-T}) associating the group-theoretic datum of a coset element to an ``isomorphism class'' of twists; more precisely, the coset element is an invariant under taking twists that yield isomorphic algebras.  

The extent to which the function does or does not lose information relates to our ability to prove injectivity and/or surjectivity.  In specific examples, with more information about the geometry concerned, stronger results may be possible.

\begin{lemma}\label{l:ker-of-Sigma} Let $\mathrm{Orb}_{\rho}(\bar{\nu})\in (\gtpn / \cT) \git \cP$. There is a bijection between the sets
  \begin{enumerate}
  \item $\Sigma^{-1}(\Sigma(\mathrm{Orb}_{\rho}(\bar{\nu})))$, the pre-image under $\Sigma$ of $\Sigma(\mathrm{Orb}_{\rho}(\bar{\nu}))\in \Aut(E \uparrow \pro{n})\sigma$ and
  \item $(\ker \mathrm{Res}_{E})^{\nat}=\{ \{ k_{r} \mid r\in \nat \} \mid k_{r}\in \ker \mathrm{Res}_{E} \}$, the set of $\nat$-indexed sequences of elements of $\ker \mathrm{Res}_{E}$.
    \end{enumerate}
\end{lemma}

\begin{proof} Let $\mathrm{Orb}_{\rho}(\bar{\mu})$ and $\mathrm{Orb}_{\rho}(\bar{\nu})$ be orbits such that $\Sigma(\mathrm{Orb}_{\rho}(\bar{\mu}))=\Sigma(\mathrm{Orb}_{\rho}(\bar{\nu}))$.  Let $\bar{o}$ and $\bar{\pi}$ be the standard representatives of $\mathrm{Orb}_{\rho}(\bar{\mu})$ and $\mathrm{Orb}_{\rho}(\bar{\nu})$ respectively, so that
  \[ (o_{1}^{E})^{-1}\sigma=\Sigma(\mathrm{Orb}_{\rho}(\bar{\mu}))=\Sigma(\mathrm{Orb}_{\rho}(\bar{\nu}))=(\pi_{1}^{E})^{-1}\sigma \]
  Then $o_{1}^{E}=\pi_{1}^{E}$ and, since $\bar{o}$ and $\bar{\pi}$ are generated in degree 1, $o_{r}^{E}=\pi_{r}^{E}$ for all $r$.

  Let $K=\ker \mathrm{Res}_{E}$, so that $\mathrm{Res}_{E}^{-1}(o_{r}^{E})=o_{r}K$.  Since $\mathrm{Res}_{E}(\pi_{r})=\pi_{r}^{E}=o_{r}^{E}$, we must have $\pi_{r}\in o_{r}K$ and hence there exists $k_{r}\in K$ such that $\pi_{r}=o_{r}k_{r}$.  Notice that since $\bar{o}$ and $\bar{\pi}$ are standard, $o_{0}=\pi_{0}=\id_{\pro{n}}$ and so $k_{0}=\id_{\pro{n}}$.

  Denote by $\bar{ok}$ the geometric system $\bar{ok}=\{ o_{r}k_{r} \}$.  This is a geometric system, since $\bar{o}$ is and $\mathrm{Res}_{E}(o_{r}k_{r})=\mathrm{Res}_{E}(o_{r})\mathrm{Res}_{E}(k_{r})=\mathrm{Res}_{E}(o_{r})=o_{r}^{E}$.  Note that $\bar{ok}$ is standard in its orbit since $\bar{ok}_{0}=\id_{\pro{n}}$.  Then $\Sigma(\mathrm{Orb}_{\rho}(\bar{ok}))=(\mathrm{Res}_{E}(o_{1}k_{1}))^{-1}\sigma=(o_{1}^{E})^{-1}\sigma=\Sigma(\mathrm{Orb}_{\rho}(\bar{o}))$.

  Indeed, the same argument shows that if $\bar{k}=\{ k_{r} \}$ is any choice of $\nat$-indexed sequence of elements of $K$, then $\bar{k}$ is automatically a geometric system, with $k_{r}^{E}=\id_{E}$ for all $r$.  Moreover, for any geometric system $\bar{o}$, $\bar{ok}=\bar{o}\circ \bar{k}$ satisfies $\Sigma(\mathrm{Orb}_{\rho}(\bar{o}))=\Sigma(\mathrm{Orb}_{\rho}(\bar{ok}))$, and we conclude the result.
\end{proof}

\begin{corollary}\label{c:sigma-inj} If the surjective homomorphism $\mathrm{Res}_{E}$ is an isomorphism, then $\Sigma$ is injective. \qed
\end{corollary}

Having considered injectivity of $\Sigma$, let us now develop a criterion for its surjectivity.  Let $\epsilon^{-1}\sigma \in \Aut(E \uparrow \pro{n})\sigma$ and define maps $\epsilon_{r}\in \Aut(E)$ by $\epsilon_{r}\defeq (\sigma^{r-1}\epsilon)(\sigma^{-1}\epsilon)^{r-1}$.

Then we have a commutative diagram
  \begin{center}
\begin{tikzpicture}[node distance=1.5cm,on grid]
\node (a) at (0,0) {$E$};
\node (b) [below=of a] {$E$};
\node (c) [right=of a] {$E$};
\node (d) [below=of c] {$E$};
\node (e) [right=of c] {$E$};
\node (f) [below=of e] {$E$};
\node (g) [right=of e] {$E$};
\node (h) [below=of g] {$E$};
\node (i) [right=of g] {$E$};
\node (j) [below=of i] {$E$};
\node (k) [right=of i] {};
\node (l) [below=of k] {};

\draw[->] (a) -- node [left] {$\id_{E}$} (b);

\draw[->] (a) -- node [above] {$\epsilon^{-1}\sigma$} (c);
\draw[->] (b) -- node [below] {$\sigma$} (d);
\draw[->] (c) -- node [left] {$\epsilon_{1}$} (d);

\draw[->] (c) -- node [above] {$\epsilon^{-1}\sigma$} (e);
\draw[->] (d) -- node [below] {$\sigma$} (f);
\draw[->] (e) -- node [left] {$\epsilon_{2}$} (f);

\draw[->] (e) -- node [above] {$\epsilon^{-1}\sigma$} (g);
\draw[->] (f) -- node [below] {$\sigma$} (h);
\draw[->] (g) -- node [left] {$\epsilon_{3}$} (h);

\draw[->] (g) -- node [above] {$\epsilon^{-1}\sigma$} (i);
\draw[->] (h) -- node [below] {$\sigma$} (j);
\draw[->] (i) -- node [left] {$\epsilon_{4}$} (j);

\draw[->] (i) -- node [above] {$\epsilon^{-1}\sigma$} (k);
\draw[->] (j) -- node [below] {$\sigma$} (l);

\end{tikzpicture}
  \end{center}
  Then if (and only if) for all $r\in \bN$ there exist maps $\hat{\epsilon}_{r}\in \Aut(\pro{n})$ such that $\hat{\epsilon}_{r}|_{E}=\epsilon_{r}$, then $\bar{\epsilon}=\{ \epsilon_{r} \}$ will be a geometric system satisfying $\Sigma(\mathrm{Orb}_{\rho}(\bar{\epsilon}))=\epsilon^{-1}\sigma$.  That is, $\Sigma$ is surjective if for all $\epsilon^{-1}\in \Aut(E \uparrow \pro{n})$ and for all $r$, we have 
\begin{equation}\label{eq:sigma-surj}
(\sigma^{r-1}\epsilon)(\sigma^{-1}\epsilon)^{r-1}\in \Aut(E \uparrow \pro{n}),
\end{equation}
or equivalently $\epsilon_{r}^{*}\cL\iso \cL$.  However this is not a practical condition to check if $\Aut(E\uparrow \pro{n})$ is not extremely small.  Note that a simple rearrangement of \eqref{eq:sigma-surj} gives an equivalent equation
\begin{equation}
\left(\sigma^{r-1}(\epsilon\sigma^{-1})^{r-1}\epsilon^{-(r-1)}\right)\epsilon^{r}\in \Aut(E \uparrow \pro{n}),
\end{equation}
which has the advantage of making it clear that if $\sigma$ and $\epsilon$ commute, the computation collapses to observing that $\epsilon^{r}\in \Aut(E\uparrow \pro{n})$, which holds by our choice of $\epsilon^{-1}$.

  Note that it is \emph{a priori} possible that an alternative construction starting from an arbitrary $\epsilon$ could yield a geometric system whose image under $\Sigma$ is $\epsilon^{-1}\sigma$; the one above is natural, of course.

In particular, we see from the above that $\Sigma$ is surjective if $\sigma^{*}\cL \iso \cL$, whence $\sigma\in \Aut(E\uparrow \pro{n})$.  However this is very strong: in dimension 3, as noted in \cite{ATV1}, if the triple $\cT(\cA)=(E,\sigma,\cL)$ coming from $\cA\in \ncpn$ has $\sigma\in \Aut(E\uparrow \pro{n})$ then the algebra associated to $\cA$ is twist-equivalent to $\cO(\pro{2})$.  But the latter situation is one we can analyse directly, as we shall do below (in all dimensions).

As remarked above, when we have a slice groupoid, such as $\ncpn/\cA$ or $\gtpn/\cT$, the only task is to understand its objects, since every Hom-set has cardinality 1.  We are now in a position to connect together
\begin{itemize}
\item the function $\mathrm{Orb}_{\rho}$ from the objects of $\gtpn/\cT(\cA)$ to the set of $\rho$-orbits $(\gtpn/\cT(\cA))\git \cP$ and
\item the function $\Sigma\colon (\gtpn/\cT(\cA))\git \cP) \to \Aut(E \uparrow \pro{n})\sigma$.
\end{itemize}
We would like an injective composite function, as then we have an upper bound for the number of twists (up to isomorphism), given by $|\Aut(E\uparrow \pro{n})\sigma|$.  We would also like to identify elements in the image, as this gives us lower bounds.

The function obtained from composing the maps listed above will very rarely be injective.  However at the various stages, we have information about the obstructions to injectivity, and we can replace our non-injective functions by injective ones, which we will do as follows.

Let $S$ be a set and $\mathcal{S}=\{ S_{i} \mid i\in I\}$ an \emph{equipartition} of $S$, that is, a partition of $S$ such that for all $i_{1},i_{2}\in I$, there exists a bijection between $S_{i_{1}}$ and $S_{i_{2}}$.  If $J$ is a set in bijection with $S_{i}$ for some (and hence for all) $i$, then it follows that there is a bijection of $S$ with $I\times J$.  Instances of equipartitions include (a) a free group action of $G$ on $S$ (trivial stabilizers imply equipartition into orbits) yielding a bijection between $S$ and $G\times (S/G)$ for $S/G$ the orbit space and (b) the partition of a group $G$ into cosets of the kernel of a group homomorphism $f\colon G\to H$ yielding a (set) bijection between $G$ and $\ker f \times \im f$.

Note that if $f\colon X\to Y$ is a function such that $\{ f^{-1}(y) \mid y\in Y\}$ is an equipartition, we have a bijection $p_{y}\colon X \to f^{-1}(y) \times \im f$ (for any choice of $y$).  Consequently\footnote{Of course, this is both elementary and fundamental (especially in the case of homomorphisms) but as it is less frequently applied to arbitrary set functions, we include the details for clarity.}, we have an injective function $\hat{p}_{y} \colon X \to f^{-1}(y) \times Y$, induced by the inclusion of $\im f$ into $Y$.  

Now we may apply this basic principle (twice) to obtain our main theorem.

\begin{theorem}\label{t:aut-ncpn} Let $\cA$ be a geometric noncommutative $\pro{n}$ and let $\cT=\cT(\cA)$.  There is an injective function
  \[ \mathrm{Obj}(\gtpn/\cT)\to \Aut(E\uparrow \pro{n})\sigma \times (\ker \mathrm{Res}_{E})^{\nat} \times \Aut(\pro{n}). \]
\end{theorem}

\begin{proof}
We claim that the functions $\mathrm{Orb}_{\rho}$ and $\Sigma$ give rise to equipartitions and hence, at the expense of altering the codomains, we may replace them by injective functions.

In the case of $\mathrm{Orb}_{\rho}$ this is immediate from Lemma~\ref{l:rho-is-free}, telling us that $\rho$ is a free action.  So $\mathrm{Orb}_{\rho}$ gives rise to an injective function
\[ \widehat{\mathrm{Orb}}_{\rho}\colon \mathrm{Obj}(\gtpn/\cT)\to ((\gtpn/\cT)\git \cP) \times \cP, \]
where we recall that $\cP=\Aut(\pro{n})$.

For $\Sigma$, Lemma~\ref{l:ker-of-Sigma} implies that $\Sigma$ determines an equipartition into the set of pre-images under $\Sigma$, since every pre-image is in bijection with $(\ker \mathrm{Res}_{E})^{\nat}$.  Then as above there is an induced bijection $(\gtpn/\cT) \git \cP \to (\ker \mathrm{Res}_{E})^{\nat} \times \im \Sigma$ and hence an induced injection
\[ \widehat{\Sigma}\colon ((\gtpn/\cT) \git \cP) \to \Aut(E \uparrow \pro{n})\sigma \times (\ker \mathrm{Res}_{E})^{\nat}. \]

Then $(\widehat{\Sigma}\times \id)\circ \widehat{\mathrm{Orb}}_{\rho}$ is the desired injection.
\end{proof}

We remark that, as previously discussed, the final term in the product that is the codomain of the injective function of the theorem---namely $\Aut(\pro{n})$---relates to choices within the isomorphism class of the algebra corresponding to $\cA$.  This follows from Proposition~\ref{p:const-morph-iso} and the observations immediately afterwards.

The term $(\ker \mathrm{Res}_{E})^{\nat}$ corresponds to non-uniqueness in extending maps from $E$ to $\pro{n}$ and the associated geometric systems obtained by taking elements of this product certainly need not be constant, so this term does not necessarily correspond to moving within an isomorphism class.  Fortunately, in examples, $\ker \mathrm{Res}_{E}$ can be explored using only the information of $E$ and its embedding into $\pro{n}$.  Heuristically, if $E$ is sufficiently large (i.e.\ has enough points in suitably general position) this kernel will be small or trivial; challenging cases here would be when $E$ is a finite collection of points.

In several important examples, to be given in detail in the next section, we see that it is reasonable to interpret the above theorem and the remarks in the preceding paragraphs to say that twists are controlled by the coset $\Aut(E\uparrow \pro{n})\sigma$, which we can describe explicitly.

\section{Examples}

We conclude with three examples, the first being the commutative algebra $\cO(\pro{n})$.  Secondly, we look at quantum projective space, $\cO_{q}(\pro{n})$.  The final example concerns 3-dimensional Sklyanin algebras, whose point schemes are smooth elliptic curves.

\begin{example}

  Consider $\cA=(\QGr \cO(\pro{n}),\pi\cO(\pro{n}),(1))$, with associated geometric triple $\cT=(\pro{n},\id_{\pro{n}},\cO(1))$.  To simplify notation, write $\id$ for $\id_{\pro{n}}$ in what follows.

  We need to identify the terms in the product $\Aut(E\uparrow \pro{n})\sigma \times (\ker \mathrm{Res}_{E})^{\nat} \times \Aut(\pro{n})$ in this case.  Firstly, $\Aut(\pro{n} \downarrow \pro{n})=\Aut(\pro{n}\uparrow \pro{n})=\Aut(\pro{n})$ and we see that $\Res_{\pro{n}}$ is the identity map. 

Now since $\id^{*}\cO(1)=\cO(1)$, $\Sigma$ is surjective.  Indeed, given $\tau^{-1}=\tau^{-1}\id\in \Aut(\pro{n} \uparrow \pro{n})\id$,
  \[ \tau_{r}=\id^{r-1}(\tau\,\id^{-1})^{r-1}\tau=\tau^{r}\in \Aut(\pro{n} \uparrow \pro{n}) \]
  so $\bar{\tau}=\{ \tau^{r} \}$ is a geometric system with $\Sigma(\mathrm{Orb}_{\rho}(\bar{\tau}))=\tau^{-1}\id$.

Hence $(\gtpn/\cT) \git \cP$ is in bijection with $\Aut(\pro{n})$ (as a set).  Since every $\rho$-orbit is itself in bijection with $\Aut(\pro{n})$, we conclude that $\gtpn/\cT$ is in bijection with $\Aut(\pro{n})\times \Aut(\pro{n})$.  We may disregard the second term if we wish to consider twists up to isomorphism, concluding that twists of $\cO(\pro{n})$ are parametrised by $\Aut(\pro{n})$, as expected.

  Note also that $\bar{\nu}=(\pro{n},\sigma',\cO(1))\to (\pro{n},\id,\cO(1))$ is in the same $\rho$-orbit as $\bar{\tau}=(\pro{n},\sigma,\cO(1))\to (\pro{n},\id,\cO(1))$ if and only if there is a constant geometric system $\bar{\mu}$ such that $\bar{\nu}=\bar{\tau}\circ \bar{\mu}$, i.e.
  \begin{center}
\begin{tikzpicture}[node distance=1.5cm,on grid]
\begin{scope}
  \node (a) at (0,0) {$\pro{n}$};
\node (b) [below=of a] {$\pro{n}$};
\node (c) [right=of a] {$\pro{n}$};
\node (d) [below=of c] {$\pro{n}$};
\node (e) [right=of c] {$\pro{n}$};
\node (f) [below=of e] {$\pro{n}$};
\node (g) [right=of e] {$\pro{n}$};
\node (h) [below=of g] {$\pro{n}$};
\node (i) [right=of g] {$\pro{n}$};
\node (j) [below=of i] {$\pro{n}$};
\node (k) [right=of i] {};
\node (l) [below=of k] {};

\draw[->] (a) -- node [left] {$\nu_{0}$} (b);

\draw[->] (a) -- node [above] {$\sigma'$} (c);
\draw[->] (b) -- node [below] {$\id$} (d);
\draw[->] (c) -- node [left] {$\nu_{1}$} (d);

\draw[->] (c) -- node [above] {$\sigma'$} (e);
\draw[->] (d) -- node [below] {$\id$} (f);
\draw[->] (e) -- node [left] {$\nu_{2}$} (f);

\draw[->] (e) -- node [above] {$\sigma'$} (g);
\draw[->] (f) -- node [below] {$\id$} (h);
\draw[->] (g) -- node [left] {$\nu_{3}$} (h);

\draw[->] (g) -- node [above] {$\sigma'$} (i);
\draw[->] (h) -- node [below] {$\id$} (j);
\draw[->] (i) -- node [left] {$\nu_{4}$} (j);

\draw[->] (i) -- node [above] {$\sigma'$} (k);
\draw[->] (j) -- node [below] {$\id$} (l);
\end{scope}

\node[rotate=-90] at (3.75,-2.5) {$=$};

\begin{scope}[yshift=-3.25cm]
  \node (a) at (0,0) {$\pro{n}$};
\node (b) [below=of a] {$\pro{n}$};
\node (c) [right=of a] {$\pro{n}$};
\node (d) [below=of c] {$\pro{n}$};
\node (e) [right=of c] {$\pro{n}$};
\node (f) [below=of e] {$\pro{n}$};
\node (g) [right=of e] {$\pro{n}$};
\node (h) [below=of g] {$\pro{n}$};
\node (i) [right=of g] {$\pro{n}$};
\node (j) [below=of i] {$\pro{n}$};
\node (k) [right=of i] {};
\node (l) [below=of k] {};
\node (m) [below=of b] {$\pro{n}$};
\node (n) [right=of m] {$\pro{n}$};
\node (o) [right=of n] {$\pro{n}$};
\node (p) [right=of o] {$\pro{n}$};
\node (q) [right=of p] {$\pro{n}$};
\node (r) [right=of q] {};

\draw[->] (a) -- node [left] {$\mu$} (b);

\draw[->] (a) -- node [above] {$\sigma'$} (c);
\draw[->] (b) -- node [below] {$\sigma$} (d);
\draw[->] (c) -- node [left] {$\mu$} (d);

\draw[->] (c) -- node [above] {$\sigma'$} (e);
\draw[->] (d) -- node [below] {$\sigma$} (f);
\draw[->] (e) -- node [left] {$\mu$} (f);

\draw[->] (e) -- node [above] {$\sigma'$} (g);
\draw[->] (f) -- node [below] {$\sigma$} (h);
\draw[->] (g) -- node [left] {$\mu$} (h);

\draw[->] (g) -- node [above] {$\sigma'$} (i);
\draw[->] (h) -- node [below] {$\sigma$} (j);
\draw[->] (i) -- node [left] {$\mu$} (j);

\draw[->] (i) -- node [above] {$\sigma'$} (k);
\draw[->] (j) -- node [below] {$\sigma$} (l);

\draw[->] (b) -- node [left] {$\tau_{0}$} (m);

\draw[->] (m) -- node [below] {$\id$} (n);
\draw[->] (d) -- node [left] {$\tau_{1}$} (n);

\draw[->] (n) -- node [below] {$\id$} (o);
\draw[->] (f) -- node [left] {$\tau_{2}$} (o);

\draw[->] (o) -- node [below] {$\id$} (p);
\draw[->] (h) -- node [left] {$\tau_{3}$} (p);

\draw[->] (p) -- node [below] {$\id$} (q);
\draw[->] (j) -- node [left] {$\tau_{4}$} (q);

\draw[->] (q) -- node [below] {$\id$} (r);
\end{scope}

\end{tikzpicture}
  \end{center}
  
  That is, there must exist $\mu\in \Aut(\pro{n})$ such that $\sigma'=\mu^{-1}\sigma\mu$.  This recovers the observation of Zhang that in this case, isomorphic twists correspond to conjugate elements of $\Aut(\pro{n})$.
\end{example}

\begin{example} Let $n\geq 2$. Recall 
that we define quantum projective $n$-space to be the $k$-algebra
  \[ \mathcal{O}_{q}(\pro{n})=k\langle x_{0},\dotsc ,x_{n} \rangle/\langle x_{i}x_{j}=qx_{j}x_{i}\ \forall\ i<j \rangle \]
  where $q\in k^{\ast}$ is assumed not a root of unity.

  Again, we wish to identify the terms in the product $\Aut(E\uparrow \pro{n})\sigma \times (\ker \mathrm{Res}_{E})^{\nat} \times \Aut(\pro{n})$ in this case.

    Define $\sqbinom{n+1}{2}=\{ I\subseteq \{ 0,\dotsc ,n\} \mid |I|=2 \}$.  By work of De Laet-Le Bruyn\footnote{See also Belmans-De Laet-Le Bruyn, \cite{BelmansDeLaetLeBruyn}.} (\cite[Proposition~1]{DeLaetLeBruyn}), the point scheme of $\cO_{q}(\pro{n})$ is the union of the lines
  \[ \ell_{I}\defeq \mathbb{V}(\{x_{j} \mid j\notin I\}),\ \text{for}\ I\in \sqbinom{n+1}{2} \] in $\pro{n}$.   Set $E=\bigcup_{I\in \sqbinom{n+1}{2}} \ell_{I}$.
  
  The associated automorphism of $E$ is defined on each line as
  \[ \sigma|_{\ell_{\{i_{1},i_{2}\}}}(0:\dotsm:0:p_{i_{1}}:0:\dotsm:0:p_{i_{2}}:0:\dotsm:0)=(0:\dotsm:0:p_{i_{1}}:0:\dotsm:0:qp_{i_{2}}:0:\dotsm:0), \]
  corresponding to the relator $x_{i_{1}}\otimes x_{i_{2}}-qx_{i_{2}}\otimes x_{i_{1}}$ via the correspondence indicated in the definition of geometricity (\ref{d:geometric}).  Indeed, the existence of such data $(E,\sigma)$ affirms that $\mathcal{O}_{q}(\pro{n})$ is geometric.

  This automorphism does not extend to all of $\pro{n}$.  Indeed, let $\underline{\lambda}=(\lambda_{I})_{I\in \sqbinom{n+1}{2}}\in (k^{\ast})^{\binom{n+1}{2}}$ and define $\mu(\underline{\lambda})\in \Aut E$ by
  \[ \mu(\underline{\lambda})|_{\ell_{\{i_{1},i_{2}\}}}((0:\dotsm:0:p_{i_{1}}:0:\dotsm :0:p_{i_{2}}:0:\dotsm:0)=(0:\dotsm:0:\lambda_{\{i_{1},i_{2}\}}p_{i_{1}}:0:\dotsm :0:p_{i_{2}}:0:\dotsm:0). \]
  Then one may check that a purported extension of $\mu(\underline{\lambda})$ to $\pro{n}$ would have to be represented by the image of the diagonal matrix
  \[ \mathrm{diag}(\prod_{k=0}^{n-1} \lambda_{k(k+1)},\dotsc ,\prod_{k=i}^{n-1} \lambda_{k(k+1)},\dotsc ,\lambda_{(n-1)n},1) \]
  in $\PGL{n+1}{k}$ subject to the conditions
  \[ \lambda_{i_{1},i_{2}}=\prod_{k=i_{1}}^{i_{2}-1} \lambda_{k(k+1)} \]
  for all $i_{1}<i_{2}$.  Now $\sigma=\mu(\underline{\lambda})$ with $\lambda_{i_{1},i_{2}}=q^{-1}$ for all $i_{1},i_{2}$, which therefore does not extend to $\pro{n}$ since $q$ is not a root of unity.

  Noting that any element $\nu$ of $\PGL{n+1}{k}$ preserving $E\subseteq \pro{n}$ must be projectively linear, send each line $\ell_{I}$ to another line $\ell_{J}$ and each intersection $e_{IJ}\defeq \ell_{I}\cap \ell_{J}$ to another such intersection, we see that $\nu$ is determined up to scalars by its values on the intersections $e_{IJ}$.  Furthermore, we must have $\ker \mathrm{Res}_{E}$ trivial, since if $\nu$ fixes $E$ pointwise, it is the identity on $\pro{n}$. By Corollary~\ref{c:sigma-inj}, $\Sigma$ is injective.

From a consideration of the remaining possibilities, it follows that
  \[ \Aut(\pro{n} \downarrow E)\iso \Aut(E \uparrow \pro{n})\iso S_{n+1}\ltimes ((k^{\ast})^{n+1}/k^{\ast}) \]
  with $S_{n+1}$ acting on $(k^{\ast})^{n+1}$ by the natural permutation action $\triangleright$: that is, for $\sigma\in S_{n+1}$, $\mu\in (k^{\ast})^{n+1}/k^{\ast}$ and $\underline{p}\in \pro{n}$ we have $(\sigma \triangleright \mu)(\underline{p})=(\sigma \circ \mu \circ \sigma^{-1})(\underline{p})$.

By explicit calculation using the above, we have checked equation~\eqref{eq:sigma-surj} and established that in that case, the image of $\Sigma$ is equal to $((k^{\ast})^{n+1}/k^{\ast})\sigma$.  This recovers the result of Mori, who has shown that $A=\cO_{(\alpha,\beta,\gamma)}(\pro{2})$ and $A'=\cO_{(\alpha',\beta',\gamma')}(\pro{2})$ are twist equivalent if and only if $\alpha'\beta'\gamma'=(\alpha\beta\gamma)^{\pm 1}$.  Recalling that $Q=(q^{-1},q,q^{-1})$ gives the single parameter quantum $\pro{2}$, $\cO_{q}(\pro{2})$, we see as a special case that $\cO_{q}(\pro{2})$ is not twist equivalent to $\cO_{q'}(\pro{2})$ for all but one other choice of $q'\in k^{\ast}$: 
\[ ((q')^{-1})\cdot q' \cdot ((q')^{-1})=(q^{-1})\cdot q \cdot (q^{-1})^{\pm 1} \qquad \iff \qquad q'=q^{\pm 1} \]
Moreover our computation shows that $\cO_{q}(\pro{2})$ has no other algebras in its twist-equivalence component other than multi-parameter quantum projective spaces; none arise from the $S_{3}$ factor so there are no non-algebraic twists in this case.  

This makes it very clear that $\ncp{2}$ has very many connected components.  It also shows that in the case of quantum algebras defined with respect to a parameter $q\in k^{\ast}$ (or indeed, multiple parameters) varying $q$ does not give rise to twist equivalences.  So while twist equivalence has some features of a deformation-theoretic problem, it is too strong and somewhat too discrete to be related to smooth variation of deformation parameters and the associated geometry of the latter.

Two natural questions remain open, awaiting further exploration.  Namely, the corresponding calculations for $\cO_{q}(\pro{n})$ and the exploration of algebras associated to other elements of $\Aut(E)$.
\end{example}

\begin{remark} Recent work of Itaba-Matsuno (\cite{ItabaMatsuno}) gives tables of defining relations for 3-dimensional quadratic AS-regular algebras.  Note that certain statements there and here are not directly comparable, due to differences in the definitions and of terminology.  For example, we reserve the term ``graded Morita equivalence'' for equivalence of the full graded module categories (with morphisms of all degrees), rather than equivalence of graded modules categories with morphisms only of degree zero (what we call twist-equivalence).
  \end{remark}

\begin{example}\label{e:Sklyanin} Let $k$ be of characteristic not equal to 2 or 3.  Consider the 3-dimensional Sklyanin algebras
  \[ \mathrm{Skl}_{3}(a,b,c)\defeq k\langle x_{0},x_{1},x_{2} \rangle / \langle ax_{i}x_{i+1}+bx_{i+1}x_{i}+cx_{i+2}^{2}\rangle \]
  where all indices are taken modulo 3.  For all but a known finite set of points $(a,b,c)$, this algebra is a geometric noncommutative $\pro{2}$ with associated point scheme the smooth elliptic curve
  \[ E\colon abc(x^{3}+y^{3}+z^{3})-(a^{3}+b^{3}+c^{3})xyz=0 \]
  and automorphism $\sigma$ given by translation by a point (with respect to the group law on $E$).  Note that again $\ker \mathrm{Res}_{E}$ is trivial: $E$ contains sufficiently many non-collinear points to ensure that elements of $\Aut(E\uparrow \pro{2})$ extend uniquely.

 A summary of relevant results and references concerning the 3-dimensional Sklyanin algebras (and their relationship with mathematical physics) may be found in work of Walton (\cite{Walton}).  For completeness, we also note that De Laet (\cite{DeLaet}) has identified an action by graded algebra automorphisms of the Heisenberg group $H_{3}$ of order 27 on Sklyanin algebras.

  By general results on elliptic curves (see for example, \cite[\S III]{Silverman}), every automorphism of $E$ is the composition of a translation and an isogeny of the curve with itself.  Thus $\Aut(E)$ is the semidirect product of the curve itself with the auto-isogeny group, the isogenies preserving a chosen base point and the translations changing the base point.

  The auto-isogeny groups of elliptic curves are well-known: depending on the $j$-invariant of the curve, the auto-isogeny group is cyclic of order $n=2$, 4 or 6
   (cf.\ \cite[Corollary III.10.2]{Silverman}).  It is common to call the auto-isogeny group $\Aut(E)$, but we shall not; rather we will denote it $\mathcal{I}(E)$.  Note that the aforementioned action is projectively linear and hence $\mathcal{I}(E)\subseteq \Aut(E\uparrow \pro{2})$.

  Furthermore, it is known when a translation extends to an automorphism of the ambient $\pro{2}$.  This is given explicitly in \cite[Lemma 5.3]{Mori}, where it is shown that a translation $\tau_{p}$ by a point $p$ extends to $\pro{2}$ if and only if $p$ is 3-torsion, i.e.\ $\tau_{p}$ has order 3.  The group $E[3]$ of 3-torsion elements of an elliptic curve $E$ is also known: it is isomorphic to $\integ/3\integ \times \integ/3\integ$, a group of order 9 (\cite[Corollary III.6.4]{Silverman}).

  Combining these results, we conclude that $\Aut(E\uparrow \pro{2})\iso (\integ/3\integ \times \integ/3\integ) \rtimes \mathcal{I}(E)$, a finite group of order $9n$.  One may interpret this as saying that Sklyanin algebras are extremely rigid, or very noncommutative, as they have very few twists.

  Note that the auto-isogenies act as graded algebra automorphisms of $\mathrm{Skl}_{3}(a,b,c)$.  Furthermore, for a point $p$, $\tau_{p}\sigma$ is again a translation by a point, so that twists of Sklyanin algebras by translations are again Sklyanin algebras. Thus, we see that $\mathscr{NC}(\mathbb{P}^{2})$ contains uncountably many connected components which consist of Sklyanin algebras, and each of these components contains finitely many isoclasses of these.
\end{example}

\bibliographystyle{halpha}
\bibliography{biblio}\label{references}

\end{document}